\def\r0{\mathcal{R}_0}
\newtheorem{theorem}{Theorem}[section]
\theoremstyle{definition}
\newtheorem*{note*}{Remark}
\begin{document}

\title{When ideas go viral - complex bifurcations \\ in a two-stage transmission model}

\author{J.~HEIDECKE and M.~V. BARBAROSSA}

\affil{Frankfurt Institute for Advanced Studies, \\
	Ruth-Moufang-Straße 1 \\ 
	60438 Frankfurt, Germany\\ 
	heidecke@fias.uni-frankfurt.de}
\date{} 

\maketitle

\begin{abstract}
	We consider the qualitative behavior of a mathematical model for transmission
	dynamics with two nonlinear stages of contagion. The proposed model is inspired by phenomena occurring in epidemiology (spread
	of infectious diseases) or social dynamics (spread of opinions, behaviors, ideas), and described by a compartmental approach.
	Upon contact with a promoter (contagious individual), a naive (susceptible) person can either become promoter himself or become \textit{weakened}, hence more vulnerable. Weakened individuals become contagious when they experience a second contact with members of the promoter group. After a certain time in the contagious compartment, individuals become inactive (are insusceptible and cannot spread) and are removed from the chain of transmission. We combine this two-stage contagion process with renewal of the naive population, modeled by means of transitions from the weakened or the inactive status to the susceptible compartment. This leads to rich dynamics, showing for instance
	coexistence and bistability of equilibria and periodic orbits. Properties of (nontrivial) equilibria are studied analytically. In addition, a numerical investigation of the parameter space reveals numerous bifurcations, showing that the dynamics of such a system can be more complex than those of classical epidemiological ODE models.
\end{abstract}
\section{Introduction}
Social contagion is the spread of behaviors or attitudes through (physical or virtual) groups of people~\cite{dicofpsych}. From a mathematical point of view, modeling social contagion in large communities is very similar to modeling the transmission of an infectious disease in a population. Hence, it seems natural that methods from the field of mathematical epidemiology, such as compartmental models~\cite{sook,goffman,bettencourt,jin,woo,wang2016,kat}, are used to model social contagion phenomena. In certain cases social contagion and disease spread even have to be considered coupled to one another, as when a group in a social network criticizes vaccination~\cite{bauch}. Despite the analogies, social contagion differs from biological contagion in various aspects. For example, intellectual epidemics could be advantageous~\cite{goffman}, ideas do not require interpersonal contact to spread~\cite{hill}, or people might be asked to choose between opposite opinions~\cite{wang2016}. Thus, to mathematically describe social contagion processes, models from theoretical epidemiology might require adaptation to the specific context.\\
\ \\
The classical SEIR (susceptible-exposed-infective-recovered) model in mathematical epidemiology describes the transmission of an infectious disease in a population~\cite{martch}. When a susceptible individual comes in contact with an infective one, there is a certain probability that contagion occurs and the susceptible moves to the exposed compartment. After a latent period exposed individuals become infectious themselves and can infect others. Once the infectious period is over, the individual recovers, cannot transmit the disease to others anymore and becomes immune. Waning of immunity, i.e., transitions from the recovered to the susceptible compartment, is possible for certain diseases~\cite{martch,mvbrost}.\\
\ \\
In the context of social contagion, the spreading of a specific behavior or opinion in a population could be described as the transition of individuals from the "naive" (susceptible) status to the "promoter" (infectious) one. This transition might require several steps and depend on repeated exposure to promoters~\cite{kat,roger}. Therefore, we classify individuals as:
\begin{itemize}\setlength\itemsep{1em}
    \item  \textbf{naive/susceptible} ($S$), those who have not yet been exposed to the considered behavior/opinion,
    \item \textbf{weakened} ($W$), those who came in contact with the considered behavior/opinion, but are not yet spreading to others
    \item \textbf{promoters/infectious} ($I$), those who have embraced the considered behavior/opinion and are able to transmit it to others
    \item \textbf{inactive/resistant} ($R$), those who have been sharing the considered behavior/opinion earlier but are now neither transmitting to others nor can be re-exposed. 
\end{itemize}
\noindent In contrast to the classical SEIR approach, transition from the exposed/weakened stage to the promoter/infected stage depends on contacts with infectives/promoters. 
Promoters and susceptibles make contact sufficient to transmit the opinion/behavior at rate $\beta$. We assume that upon such a contact the susceptibles have a certain probability $\rho\in(0,1)$ to enter the $I$-compartment directly, becoming a promoter themselves (\textit{perfect contact}). With probability $1-\rho$ susceptible individuals enter the $W$-compartment (\textit{imperfect contact}). When a $W$-individual comes in contact with promoters, the weakened enters the $I$-compartment at rate $\beta_2\geq \beta$. In this sense, an individual in the $W$-compartment is more vulnerable to the opinion/behavior than an individual in the $S$-compartment. Over time promoters might reject the considered opinion/behavior. Thus, we assume that promoters leave the promoting class at rate $\alpha$ and become inactive/resistant. As we assume that the inactivity/resistance of a $R$-individual towards the considered opinion/behavior might wane over time, we allow transition from $R$ back to $S$ at rate $\eta$. Moreover, we assume that the infection can fade away in weakened individuals, whereby transitions from $W$ to $S$ are occuring at rate $\gamma$. We assume that $\gamma\geq\alpha$, suggesting that the average time an individual is able to promote a certain opinion is not shorter than his exposure time.
\begin{figure}[t]
	\centering
	\includegraphics[width=0.85\textwidth]{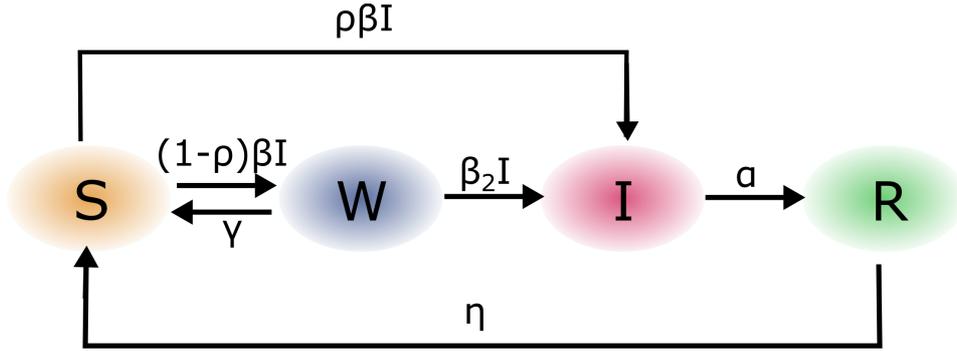}
	\caption{Flowchart of the two-stage contagion model~\eqref{eq:model} with waning of immunity and fading of infection in weakened individuals.}
	\label{fig:Figure0}
\end{figure}
A model sketch is given in Fig.~\ref{fig:Figure0} and the corresponding differential equations system is
\begin{equation}
\begin{aligned}
S'&=-\beta IS+\gamma W+\eta R\\
W'&=(1-\rho)\beta IS-\beta_2 IW-\gamma W\\
I'&=\beta_2 IW+\rho\beta IS-\alpha I\\
R'&=\alpha I-\eta R.
\label{eq:model}
\end{aligned}
\end{equation} 
Limit cases of this system lead, on the one hand, to the standard SIRS model (cf.~\cite{martch}) if $\rho=1$ and the $W$-compartment is empty at the beginning of observations. On the other hand, if the transition from $W$ to $I$ would be a  linear one and $\gamma=\rho=0$, the model would be equivalent to the classical SEIRS model (cf.~\cite{martch}).\\
\ \\
{Observing that the total population $N=S+W+I+R$ does not vary over time, we set $N\equiv 1$. Clearly, system~\eqref{eq:model} has a unique non-negative global solution for every choice of non-negative initial values, and the set $\{(S,I,W,R)\in\mathbb{R}^4_{\geq 0}\,|\,S+W+I+R=1\}$ is forward invariant. For analytical simplicity, we restrict to the case $\beta_2=\beta$. Using the conservation relation $W=1-S-I-R$, we consider the reduced system}
\begin{equation}
\begin{aligned}
S'&=-\beta IS+\gamma(1-S-I-R)+\eta R\\
I'&=\rho\beta IS-\alpha I +\beta I(1-S-I-R)\\
R'&=\alpha I-\eta R.
\label{eq:reducedmodel}
\end{aligned}
\end{equation}
A similar compartmental model for two-stage contagion was previously proposed by Guy Katriel~\cite{kat}. System~\eqref{eq:reducedmodel} differs from Katriel's work in two aspects. First we include waning immunity and fading of infection, that is transitions from $R$, respectively $W$, to the susceptible compartment. Second, we do not consider population demography (births/deaths). Such differences lead to major analytical challenges with respect to Katriel's study~\cite{kat}. In the rest of this work we study the qualitative properties of system~\eqref{eq:reducedmodel} by means of analytical and numerical methods. 

\section{Existence and local stability of equilibria}
\label{sec:results}
To understand the long-term behavior of the system we first investigate its equilibria. The criteria on the stability of the disease-free equilibrium, where the $I$ compartment is empty, are commonly related to the so-called \textit{basic reproduction number}, $\mathcal{R}_0$. We remark that the following results are derived assuming that $\gamma \geq \alpha$ holds.
\begin{theorem}
\begin{enumerate}[a)]
    \item System \eqref{eq:reducedmodel} has a unique disease-free (DFE) equilibrium $\mathcal{E}_0:=(1,0,0)$, which exists for any choice of $\beta,\,\gamma,\,\eta,\,\alpha>0$, $\rho \in (0,1)$.
    \item The DFE $\mathcal{E}_0$ is locally asymptotically stable (shortly, LAS) if $\mathcal{R}_0:=\frac{\rho\beta}{\alpha}<1$, and unstable if $\mathcal{R}_0>1$.
\end{enumerate}
\label{thm:DFEstab}	
\end{theorem}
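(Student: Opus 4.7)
The plan is to handle the two parts of the theorem by direct equilibrium analysis followed by linearization at the DFE.

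For part (a), I would substitute $I=0$ into the right-hand side of system \eqref{eq:reducedmodel} and set the derivatives to zero. The third equation immediately gives $\eta R = 0$, hence $R=0$, and then the first equation reduces to $\gamma(1-S)=0$, which forces $S=1$ because $\gamma>0$. The second equation is automatically satisfied. This shows that $\mathcal{E}_0=(1,0,0)$ is the unique equilibrium with $I=0$ in the biologically relevant region, and existence is trivial since no sign constraint is violated for any positive choice of parameters.

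For part (b), I would compute the Jacobian $J$ of the vector field in \eqref{eq:reducedmodel} and evaluate it at $\mathcal{E}_0$. The key observation is the structural vanishing that occurs at the DFE: since the only terms in $I'$ that depend on $S$ or $R$ are multiplied by $I$, one finds $\partial_S I'|_{\mathcal{E}_0}=\partial_R I'|_{\mathcal{E}_0}=0$, and similarly $\partial_S R'\equiv 0$. This yields a block lower-triangular form
\begin{equation*}
J(\mathcal{E}_0)=\begin{pmatrix}
-\gamma & -\beta-\gamma & -\gamma+\eta\\
0 & \rho\beta-\alpha & 0\\
0 & \alpha & -\eta
\end{pmatrix},
\end{equation*}
whose eigenvalues can be read off directly as $-\gamma$, $\rho\beta-\alpha$, and $-\eta$. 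Since $\gamma,\eta>0$, the sign of the real part of the only non-trivially-signed eigenvalue is controlled by $\rho\beta-\alpha$, which is negative precisely when $\mathcal{R}_0=\rho\beta/\alpha<1$.

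Local asymptotic stability in the case $\mathcal{R}_0<1$ then follows from the standard Hartman–Grobman / linearization result for hyperbolic equilibria, and instability in the case $\mathcal{R}_0>1$ follows from the existence of a positive eigenvalue $\rho\beta-\alpha>0$. The boundary case $\mathcal{R}_0=1$ is not addressed by this argument and is not claimed by the theorem, so I would make only a brief remark that it lies outside the scope. I do not expect a serious obstacle in this proof; the only point worth emphasizing is the triangular structure of $J(\mathcal{E}_0)$, which avoids the need to compute a characteristic polynomial or invoke the Routh–Hurwitz criterion and makes the hypothesis $\gamma\geq\alpha$ irrelevant at this stage.
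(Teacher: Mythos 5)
Your proposal is correct and follows essentially the same route as the paper: solving the equilibrium equations with $I=0$ for part (a), and reading off the eigenvalues $-\gamma$, $\rho\beta-\alpha$, $-\eta$ of the Jacobian at $\mathcal{E}_0$ for part (b). The block-triangular structure you highlight is implicit in the paper's computation, and your observation that $\gamma\geq\alpha$ plays no role here is accurate.
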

\begin{proof}
\textit{a)}: Equilibria of system~\eqref{eq:reducedmodel} are determined setting the right-hand side of the system equal to zero,
\begin{equation}
    \begin{aligned}
    0=&-\beta I^*S^*+\gamma(1-S^*-I^*-R^*)+\eta R^*\\
	0=&I^*\left(\rho\beta S^*-\alpha  +\beta (1-S^*-I^*-R^*)\right)\\
	0=&\alpha I^*-\eta R^*.
    \label{eq:equilibria1}
    \end{aligned}
\end{equation}
	If $I^*=0$ (DFE condition), then the third relation in~\eqref{eq:equilibria1} implies that $R^*=0$ and the first relation in~\eqref{eq:equilibria1} implies that $S^*=1$. Thus, $\mathcal{E}_0$ is the unique DFE of the system. 
	\textit{b)}: For studying the local stability of the DFE we linearize about $\mathcal{E}_0$. The Jacobian matrix of~\eqref{eq:reducedmodel} is
	\begin{equation}
	J=
	\left( {\begin{array}{ccc}
		-\beta I-\gamma & -\beta S-\gamma & \eta-\gamma\\
		\left(\rho-1\right)\beta I\,\, &\beta\left( \left(\rho-1\right)S+1-2 I - R\right)-\alpha\,\, & -\beta I \\
		0 & \alpha & -\eta \\	
		\end{array} } \right).
	    \label{Jacobian}
	\end{equation}
Evaluation of $J$ at $\mathcal{E}_0$ yields the eigenvalues $\lambda_1=-\gamma<0$, $\lambda_2=\rho\beta-\alpha$ and $\lambda_3=-\eta<0$. The condition $\lambda_2<0 \iff \r0<1$ guarantees local stability of the DFE.
\end{proof}
The above expression for $\mathcal{R}_0$ can be interpreted as the number of secondary infections that a single promoter introduced into a completely susceptible population produced through {perfect contacts} ($\rho\beta$) over the duration of its promoting period ($1/\alpha$).
The secondary infections produced through imperfect contacts require two nonlinear transitions over the $W$-compartment, which is empty at $\mathcal{E}_0$, hence do not contribute to determining the local stability of the DFE.\\\\
Theorem~\ref{thm:DFEstab} provides conditions for the local stability of the DFE. What exactly happens at the bifurcation value $\r0=1$ can be determined with the help of Theorem~\ref{theo:chavez}, first introduced by Castillo-Chavez and Song~\cite{chavez2004}. For convenience of notation we define  $\kappa:=\left(1+\alpha/\eta\right)$.
\begin{theorem}\label{theo:bifdir}
	Let $\rho^*:=\frac{-\alpha+\sqrt{\alpha\gamma\kappa}}{\gamma\kappa-\alpha}$. If $\rho>\rho^*$, then a transcritical bifurcation of forward type occurs at $\mathcal{R}_0=1$. If $\rho<\rho^*$, then a transcritical bifurcation of backward type occurs at $\mathcal{R}_0=1$.
\end{theorem}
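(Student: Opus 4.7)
The plan is to apply the Castillo--Chavez--Song theorem (Theorem~\ref{theo:chavez}) at the bifurcation point $\mathcal{R}_0=1$, using $\beta$ as the bifurcation parameter with critical value $\beta^{*}=\alpha/\rho$. The hypotheses of that theorem require (i) a simple zero eigenvalue of the Jacobian at $(\mathcal{E}_0,\beta^{*})$ with all other eigenvalues having negative real part, and (ii) explicit computation of two nonlinearity coefficients $a$ and $b$ whose signs dictate the direction of the bifurcation. Step (i) is essentially free: from the Jacobian~\eqref{Jacobian} specialized to $\mathcal{E}_0$, the eigenvalues are $-\gamma$, $\rho\beta-\alpha$, $-\eta$, so at $\beta=\beta^{*}$ we get a simple zero eigenvalue and two strictly negative ones.

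Next I would compute the left and right eigenvectors associated with the zero eigenvalue. Solving $v^{T}J|_{\mathcal{E}_0,\beta^{*}}=0$ immediately gives $v=(0,1,0)$ (the only marginal direction sits in the $I$-equation). For $w$, solving $Jw=0$ with the normalization $w_2=1$ yields $w_3=\alpha/\eta$ from the $R$-row and, from the $S$-row,
\[
w_1=-\frac{\beta^{*}+\gamma\kappa-\alpha}{\gamma}.
\]
Because $v$ has only its second component nonzero, only the second equation $f_2$ contributes to $a$ and $b$. A direct second-derivative computation of $f_2$ at $\mathcal{E}_0$ gives the nonvanishing entries $\partial_{SI}f_2=(\rho-1)\beta^{*}$, $\partial_{II}f_2=-2\beta^{*}$, $\partial_{IR}f_2=-\beta^{*}$, and $\partial_{I\beta}f_2=\rho$, so that
\[
a=2\beta^{*}\bigl[(\rho-1)w_1-1-\alpha/\eta\bigr],\qquad b=\rho>0.
\]

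The decisive step is then to substitute $w_1$ and $\beta^{*}=\alpha/\rho$ into $a$, clear denominators, and reorganize as a polynomial in $\rho$. A short calculation shows that the sign of $a$ equals the sign of $-P(\rho)$, where
\[
P(\rho)=(\gamma\kappa-\alpha)\rho^{2}+2\alpha\rho-\alpha.
\]
The assumption $\gamma\ge\alpha$ gives $\gamma\kappa-\alpha>0$, so $P$ is a convex quadratic with $P(0)=-\alpha<0$ and a single positive root, obtained from the quadratic formula as
\[
\rho^{*}=\frac{-\alpha+\sqrt{\alpha\gamma\kappa}}{\gamma\kappa-\alpha},
\]
matching the expression in the statement. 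Hence $a<0$ iff $\rho>\rho^{*}$, and $a>0$ iff $\rho<\rho^{*}$.

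Combining with $b>0$, Theorem~\ref{theo:chavez} then yields a forward transcritical bifurcation at $\mathcal{R}_0=1$ when $\rho>\rho^{*}$ and a backward one when $\rho<\rho^{*}$. The main obstacle here is not conceptual but bookkeeping: one must carry out the reduction of $a$ into the polynomial $P(\rho)$ without error, since the whole threshold $\rho^{*}$ is extracted from that algebraic identity; the factorization $\gamma\kappa-\alpha>0$ (which follows from $\gamma\ge\alpha$) is what guarantees that $P$ is genuinely quadratic and that the closed-form root is real.
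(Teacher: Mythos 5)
Your proposal is correct and follows essentially the same route as the paper: applying the Castillo--Chavez--Song theorem with $\beta$ as bifurcation parameter, computing $v=(0,1,0)$ and the same right eigenvector $w$ (your compact $w_1=-(\beta^{*}+\gamma\kappa-\alpha)/\gamma$ agrees with the paper's expanded form), and reducing the sign of $a$ to the quadratic $P(\rho)$, which is exactly the paper's $\psi(\rho)$ with positive root $\rho^{*}$. No substantive differences.
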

\begin{proof}
	We use Theorem \ref{theo:chavez} and take $\beta$ as our bifurcation parameter, with $\beta^*:=\frac{\alpha}{\rho}$ corresponding to $\mathcal{R}_0=1$. The Jacobian~\eqref{Jacobian} of system~\eqref{eq:reducedmodel} evaluated at $\mathcal{E}_0$ for $\beta=\beta^*$, $
	\mathcal{A}:=J|_{\mathcal{E}_0, \beta^*}$, has a simple zero eigenvalue and two eigenvalues with negative real part. To compute a right eigenvector $w$ of $\mathcal{A}$ corresponding to the zero eigenvalue, solve $\mathcal{A}w=0$, which is an underdetermined system. We fix $w_2=1$ and obtain $w_3=\frac{\alpha}{\eta}w_2=\frac{\alpha}{\eta}$ and $w_1=-\frac{\alpha}{\rho\gamma}-1+\frac{\alpha}{\gamma}-\frac{\alpha}{\eta}$. Analogously, to find a left eigenvector, we solve the system $v\mathcal{A}=0$, obtaining $v=(0,1,0)$.\\ 
	\ \\
	Next, we denote the vector field of system~\eqref{eq:reducedmodel} by $f=(f_1,f_2,f_3)$ and we calculate second order partial derivatives to verify the conditions of Theorem~\ref{theo:chavez}.
	\linebreak
	As $v_1=0=v_3$ the derivatives of $f_1$ and $f_3$ are not needed. All second order derivatives of $f_2$ are zero, except for
	\begin{align*}
	\frac{\partial^2f_2}{\partial^2 I}&=-2\beta^*,&
	\frac{\partial^2f_2}{\partial I\partial R}&=-\beta^*,\\
	\frac{\partial^2f_2}{\partial S\partial I}&=\rho\beta^*-\beta^*,&
	\frac{\partial^2f_2}{\partial I\partial \beta^*}&=\rho.
	\end{align*}
	Therefore, the quantities $a$ and $b$ in Theorem~\ref{theo:chavez} are given by
	\begin{equation*}
	b=\sum_{i=1}^3 v_2w_i\frac{\partial^2 f_2}{\partial x_i\partial \beta}(0,0)=\rho v_2 w_2=\rho>0,
	\end{equation*}
	and 
	\begin{align*}
	a=&\sum_{i,j=1}^{3}v_2w_iw_j\frac{\partial^2 f_2}{\partial x_i\partial x_j}(0,0)\\=&-2\frac{\alpha}{\rho}v_2w_2^2-2\frac{\alpha}{\rho}v_2w_2w_3+2\left(\alpha-\frac{\alpha}{\rho}\right)v_2w_1w_2\notag\\
	=&-2\frac{\alpha}{\rho}-2\frac{\alpha^2}{\rho\eta}+2\left(\alpha-\frac{\alpha}{\rho}\right)\left(-\frac{\alpha}{\rho\gamma}-1+\frac{\alpha}{\gamma}-\frac{\alpha}{\eta}\right),\notag
	\end{align*}
	where $x=(x_1,x_2,x_3)=(S,I,R)$. The condition $a>0$ is equivalent to
	\begin{align}
	\psi(\rho):=\rho^2\left(\gamma\kappa-\alpha\right)+2\rho\alpha-\alpha<0.\label{parabola}
	\end{align}
	Because of the assumption $\gamma\geq\alpha$, we have $\gamma\kappa=\gamma(1+\frac{\alpha}{\eta})>\alpha$. The open up parabola $\psi(\rho)$ has negative intercept and vertex on the left half-plane. Thus, condition~\eqref{parabola} is fulfilled only if $0<\rho<\rho^*$, where 
	\begin{equation*}
	\rho^*=\frac{-\alpha+\sqrt{\alpha\gamma\kappa}}{\gamma\kappa-\alpha}
	\end{equation*}
is the positive zero of $\psi(\rho)$.
\end{proof} 
Endemic equilibria of~\eqref{eq:reducedmodel} are determined by solving~\eqref{eq:equilibria1} for $I^*\neq 0$. From the third and the first relation in~\eqref{eq:equilibria1} we get $$
R^*=\frac{\alpha}{\eta} I^* \; \mbox{ and } \; S^* = \frac{\gamma(1-\kappa I^*)+\alpha I^*}{\beta I^*+\gamma}.$$
Note here that $1-I^*-\frac{\alpha}{\eta}I^*=1-I^*-R^*\geq0$, hence $S^*\geq0$ if $I^*\geq0$. The equilibria condition thus reduces to the quadratic equation  
\begin{eqnarray}
0=a{I^*}^2+bI^*+c=:\Phi(I^*),\label{eq:quadratic}
\end{eqnarray}
with $a:=\beta\kappa>0$, $b:=\alpha\left(2-\rho\right)+\rho\gamma\kappa-\beta$ and $c:=\gamma\left(\frac{\alpha}{\beta}-\rho\right)$.
\pagebreak\vfil
Solutions to~\eqref{eq:quadratic} are given by 
\begin{equation*}
I^*_\pm=\frac{1}{2\beta\kappa}\left[\beta-\rho\gamma\kappa-\alpha\left(2-\rho\right)\pm\sqrt{\Delta}\right],
\end{equation*}
where $\Delta$ is the discriminant of~\eqref{eq:quadratic}. Hence, system~\eqref{eq:reducedmodel} has at most two endemic equilibria $\mathcal{E}_\pm:=\left(S_\pm^*,I_\pm^*,R_\pm^*\right)$,
whereby $I^*_\pm$ are the positive roots of the open up parabola $\Phi(I^*)$. 
We write the coefficients $b$ and $c$ of $\Phi(I^*)$ as functions of $\beta$, 
\begin{align*}
	b(\beta):=&\alpha(2-\rho)+\rho\gamma\kappa-\beta\\
	c(\beta):=&\gamma\left(\frac{\alpha}{\beta}-\rho\right).
\end{align*}
Note that $b(\beta)$ is a strictly decreasing linear function with zero at
\begin{align*}
	\beta_b:=\alpha(2-\rho)+\rho\gamma\kappa
\end{align*}
and $c(\beta)$ is a strictly decreasing curve with zero at
\begin{align*}
	\beta_c:=\frac{\alpha}{\rho}.
\end{align*}
Observe further that 
\begin{equation*}
c(\beta)
\left\{
\begin{matrix}
>0 \iff \r0 <1,\\
=0 \iff \r0 =1,\\
<0 \iff \r0 >1.
\end{matrix}
\right.
\end{equation*}
Theorem~\ref{theo:bifdir} suggest to consider the cases $\rho^*<\rho$ and $\rho^*>\rho$ separately.
\begin{theorem}\label{Theorem1}
	If $\rho^*<\rho<1$, then:
	\begin{itemize}
		\item For $\mathcal{R}_0\leq 1$ there are no endemic equilibria.
		\item For $\mathcal{R}_0>1$ there is a unique endemic equilibrium $\mathcal{E}_+$.
	\end{itemize}
	At $\mathcal{R}_0=1$ a transcritical bifurcation of forward type occurs, and a branch of stable endemic equilibria $\mathcal{E}_+$ emerges from $\mathcal{E}_0$. 
\end{theorem}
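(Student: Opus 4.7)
The plan is to exploit the sign analysis of the quadratic $\Phi(I^*)=a_\Phi I^{*2}+b(\beta)I^*+c(\beta)$ that has already been prepared, where $a_\Phi=\beta\kappa>0$, and $b(\beta)$, $c(\beta)$ are strictly decreasing in $\beta$ with zeros $\beta_b$ and $\beta_c=\alpha/\rho$. Existence (or non-existence) of an endemic equilibrium in each regime $\mathcal{R}_0>1$, $\mathcal{R}_0=1$, $\mathcal{R}_0<1$ will be read off from the positive roots of $\Phi$; the bifurcation direction and local stability of the emerging branch $\mathcal{E}_+$ at $\mathcal{R}_0=1$ will be inherited directly from Theorem~\ref{theo:bifdir}.

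For $\mathcal{R}_0>1$ we have $\beta>\beta_c$, hence $c(\beta)<0$ and $\Phi(0)<0$. Since $\Phi$ opens upward and tends to $+\infty$, it possesses exactly one positive root $I^*_+$ (the other real root is negative), which gives the unique endemic equilibrium $\mathcal{E}_+$.

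For $\mathcal{R}_0\leq 1$ we have $c(\beta)\geq 0$. The decisive observation is that the hypothesis $\rho>\rho^*$ is equivalent to $\beta_c<\beta_b$: clearing denominators in $\alpha/\rho<\alpha(2-\rho)+\rho\gamma\kappa$ produces exactly the inequality $\psi(\rho)>0$ that defines $\rho>\rho^*$. Therefore $\beta\leq\beta_c<\beta_b$ forces $b(\beta)>0$, and together with $a_\Phi>0$, $c\geq 0$, Vieta's formulas give sum of roots $-b/a_\Phi<0$ and product $c/a_\Phi\geq 0$, so every real root is nonpositive. Hence no strictly positive root of $\Phi$ exists; at $\mathcal{R}_0=1$ the remaining nonnegative root is $I^*=0$, which is the DFE rather than an endemic equilibrium, so $\mathcal{E}_+$ does not exist until $\mathcal{R}_0$ strictly crosses $1$.

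Finally, the bifurcation direction and the local asymptotic stability of $\mathcal{E}_+$ near $\mathcal{R}_0=1$ follow immediately from Theorem~\ref{theo:bifdir}: the case $\rho>\rho^*$ of that theorem is exactly the forward (supercritical) transcritical scenario of the Castillo–Chavez framework invoked there, and the standard conclusion of that framework is that the branch bifurcating on the side where the DFE loses stability is itself locally asymptotically stable. The only substantive algebraic content in the whole proof is the equivalence $\beta_c<\beta_b\iff\rho>\rho^*$, which is a one-line rearrangement of the definition of $\rho^*$; everything else reduces to elementary properties of an upward-opening parabola or to a direct appeal to the preceding theorem, so no serious obstacle is anticipated.
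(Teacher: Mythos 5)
Your proposal is correct and follows essentially the same route as the paper: both reduce the statement to the sign pattern of the coefficients of the upward-opening parabola $\Phi(I^*)$, both hinge on the single algebraic equivalence $\beta_b>\beta_c\iff\psi(\rho)>0\iff\rho>\rho^*$, and both delegate the bifurcation direction and stability of the emerging branch to Theorem~\ref{theo:bifdir}. Your use of Vieta's formulas is just a cosmetic variant of the paper's ``positive intercept plus vertex in the left half-plane'' argument.
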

\begin{proof}
Observe that
	\begin{equation}
	    \beta_b> \beta_c \,\iff\, 2\rho\alpha-\alpha\rho^2+\rho^2\gamma\kappa> \alpha\;\iff\, \psi(\rho)>0
	    \,\underbrace{\iff}_{\mbox{Thm.~\ref{theo:bifdir}}}\, \rho^*<\rho\label{eq:CondOnRho}.
	\end{equation}
Thus, if $\rho^*<\rho$ we have that the zero of $b(\beta)$ lies on the right of the zero of $c(\beta)$. In other words, if $c(\beta)\geq 0$ (that is $\r0\leq 1$) then necessarily $b(\beta)> 0$. This means that if the open up parabola~$\Phi(I^*)$ has a positive y-intercept, then it also has vertex on the left half-plane, hence no positive root $I^*$. If $c(\beta)<0$ (that is $\r0>1$), $\Phi(I^*)$ has a negative y-intercept, thus only its larger root $I^*_+$ is positive. The rest follows from Theorem~\ref{theo:bifdir}.
\end{proof}
Next, define \begin{equation}
    \mathcal{R}_C:=\rho(2-\rho)-\rho^2\frac{\gamma\kappa}{\alpha}+2\frac{\rho(1-\rho)}{\alpha}\sqrt{\alpha\gamma\kappa}
\label{def:Rc}
\end{equation}
and consider the case $\rho^*>\rho$.
\pagebreak\vfill
\begin{theorem}\label{Theorem2}
	If $0<\rho<\rho^*$, then the following results hold:
	\begin{enumerate}[a)]
		\item If $\mathcal{R}_0<\mathcal{R}_C$, there are no endemic equilibria.
		\item If $\mathcal{R}_C\leq\mathcal{R}_0<1$, there are two endemic equilibria $\mathcal{E}_{\pm}$, which coincide if $\mathcal{R}_C=\mathcal{R}_0$.
		\item If $1\leq\mathcal{R}_0$, there is a unique endemic equilibrium $\mathcal{E}_+$.
	\end{enumerate}
	At $\mathcal{R}_0=1$ a transcritical bifurcation of backward type occurs, and a branch of unstable endemic equilibria $\mathcal{E}_-$ emerges from $\mathcal{E}_0$.
\end{theorem}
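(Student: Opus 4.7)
The plan is to exploit the quadratic nature of $\Phi(I^*)=aI^{*2}+bI^*+c$ from~\eqref{eq:quadratic}. Since $a=\beta\kappa>0$, positive roots arise in one of two regimes: either $c(\beta)<0$, giving exactly one positive root; or $c(\beta)\ge 0$ together with $b(\beta)<0$ and $\Delta:=b^2-4ac\ge 0$, giving two non-negative roots that coalesce when $\Delta=0$. The standing hypothesis $0<\rho<\rho^*$ rearranges through~\eqref{eq:CondOnRho} into $\beta_b<\beta_c$, so any $\beta\ge\beta_c$ automatically satisfies $b(\beta)<0$.

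First I would dispose of part~c). For $\mathcal{R}_0\ge 1$, equivalently $\beta\ge\beta_c$, the combination $c\le 0$ and $b<0$ places us in the first regime: $\Phi$ has a unique positive root $I^*_+$, with the companion root reducing to $I^*_-=0$ (the DFE) precisely at $\mathcal{R}_0=1$. Combining this with Theorem~\ref{theo:bifdir}, which for $\rho<\rho^*$ asserts a backward transcritical bifurcation at $\mathcal{R}_0=1$, then yields the final claim: the branch emerging from $\mathcal{E}_0$ consists of endemic equilibria $\mathcal{E}_-$ living on the side $\mathcal{R}_0<1$, and their instability follows from the exchange of stability built into the backward bifurcation.

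For parts~a) and~b), where $\mathcal{R}_0<1$ and $c>0$, endemic equilibria require both $b<0$ and $\Delta\ge 0$. The main obstacle is to translate $\Delta(\beta)\ge 0$ into a condition on $\mathcal{R}_0$. I would view $\Delta$ as a quadratic in $\beta$ with positive leading coefficient and solve $\Delta=0$; I expect the simplification $\kappa\gamma\rho^2-\beta_b\rho+\alpha=\alpha(1-\rho)^2$ to clean up the inner expression and yield two critical values $\beta_\pm$. Rescaling $\beta_+$ by $\rho/\alpha$ should then reproduce exactly the threshold $\mathcal{R}_C$ of~\eqref{def:Rc}; the smaller root $\beta_-$ will turn out to be irrelevant since it corresponds to $\beta<\beta_b$, where $\Phi$ has no positive vertex.

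Finally I would locate $\beta_\pm$ by two sign evaluations: $\Delta(\beta_b)=-4\beta_b\kappa\,c(\beta_b)<0$ since $c(\beta_b)>0$ (because $\beta_b<\beta_c$), and $\Delta(\beta_c)=b(\beta_c)^2>0$ since $b(\beta_c)<0$. This forces the ordering $\beta_-<\beta_b<\beta_+<\beta_c$. Hence for $\mathcal{R}_C\le\mathcal{R}_0<1$ we are in the window $\beta\in[\beta_+,\beta_c)$, where $b<0$, $c>0$ and $\Delta\ge 0$ all hold, so $\Phi$ has two positive roots $I^*_\pm$ that coalesce at $\mathcal{R}_0=\mathcal{R}_C$, giving part~b); for $\mathcal{R}_0<\mathcal{R}_C$ either $\beta<\beta_b$ or $\Delta<0$, so no positive root exists, giving part~a).
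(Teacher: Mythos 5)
Your proof is correct and follows the same overall skeleton as the paper's: part c) from the implication $\beta\geq\beta_c\Rightarrow b(\beta)<0$ (via \eqref{eq:CondOnRho}), and parts a)--b) by treating the discriminant $\Delta$ of \eqref{eq:quadratic} as an open-up quadratic in $\beta$ whose larger root, rescaled by $\rho/\alpha$, is exactly $\mathcal{R}_C$. Where you genuinely diverge is in how the crucial ordering $\beta_b<\beta_+<\beta_c$ is established. The paper proves $\beta_b<\beta_\Delta$ and $\beta_\Delta<\beta_c$ by explicit algebraic reduction to two auxiliary parabolas in $\rho$ (one equivalent to $\psi(\rho)<0$, the other a parabola $\chi(\rho)$ with a double zero at $\rho^*$), and separately needs the computation $c_\Delta<0\iff\rho<2\rho^*$ to place the roots of $\Delta(\beta)$ on opposite sides of the origin. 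Your two sign evaluations, $\Delta(\beta_b)=-4\beta_b\kappa\,c(\beta_b)<0$ and $\Delta(\beta_c)=b(\beta_c)^2>0$, accomplish all of this at once: they show $\Delta$ actually attains negative values (hence has two real roots), sandwich $\beta_b$ strictly between them, and push $\beta_c$ above the larger one, with the only input being $\beta_b<\beta_c$, which is precisely the hypothesis $\rho<\rho^*$. This is shorter, less computational, and makes transparent why $\rho^*$ is the relevant threshold; the paper's route has the minor advantage of producing the closed form of $\beta_\Delta$ and the identity ``$\beta_b=\beta_\Delta\iff\rho=\rho^*$'' explicitly along the way, which you would still need to compute once to match $\beta_+$ against the definition \eqref{def:Rc} of $\mathcal{R}_C$ (your identity $\kappa\gamma\rho^2-\beta_b\rho+\alpha=\alpha(1-\rho)^2$ does check out and yields exactly the paper's $\beta_\Delta$).
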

\begin{proof}
From $\rho<\rho^*$ it follows from~\eqref{eq:CondOnRho} that $\beta_b <\beta_c$. Hence if $c(\beta)\leq 0$ (that is $\r0\geq 1$) then necessarily $b(\beta)<0$.
This means that if the open up parabola~$\Phi(I^*)$ has a non-positive y-intercept, it has vertex on the right half-plane as well, and hence a unique positive root $I_+^*$. This proofs statement \textit{c)}.
For two positive roots $I_\pm^*$ the conditions $c(\beta)> 0$, $b(\beta)< 0$ and $\Delta(\beta)\geq 0$, where 
\begin{equation*}
\begin{aligned}
	\Delta(\beta)=&\beta^2+\beta{\left(2\rho\alpha+2\rho\gamma\kappa-4\alpha\right)}\\
	&\; +\underbrace{4\alpha^2\left(1-\rho\right)+\rho^2\alpha^2+\rho^2\gamma^2\kappa^2+4\rho\alpha\gamma\kappa-2\rho^2\alpha\gamma\kappa-4\alpha\gamma\kappa}_{:=c_\Delta},
	\end{aligned}
\end{equation*} 
are necessary. For $\Delta(\beta)=0$ the two roots coincide. The discriminant $\Delta(\beta)$ as a function of $\beta$ is an open up parabola itself. Short computation shows that 
$$ c_\Delta<0\; \iff\; \rho<2\rho^*,$$ 
hence $c_\Delta$ is always negative under the assumptions of the theorem. Thus  $\Delta(\beta)\geq0$ for $\beta$ larger than the positive zero of the discriminant-parabola, that is  
\begin{align*}
	\beta&\geq\alpha(2-\rho)-\rho\gamma\kappa+2(1-\rho)\sqrt{\alpha\gamma\kappa}=:\beta_\Delta,
\end{align*}
and $\Delta(\beta)=0\iff \beta=\beta_\Delta$. Observe that 
\begin{equation}
\begin{aligned}
	\beta_b<\beta_\Delta
	&\iff\;\rho^2\gamma^2\kappa^2<(1-\rho)^2\alpha\gamma\kappa\\&\iff\;\rho^2(\gamma\kappa-\alpha)+\rho2\alpha-\alpha<0\\&\iff\;\rho<\rho^*.
\end{aligned}
	\label{rel_betaBbetaD}
\end{equation}
Further
\begin{align*}
	\beta_\Delta<\beta_c&\iff\alpha(2-\rho)-\rho\gamma\kappa+2(1-\rho)\sqrt{\alpha\gamma\kappa}<\frac{\alpha}{\rho}\notag\\&\iff
	\underbrace{\rho^2\left(-\alpha-\gamma\kappa-2\sqrt{\alpha\gamma\kappa}\right)+\rho2\left(\alpha+\sqrt{\alpha\gamma\kappa}\right)-\alpha}_{=:\chi(\rho)}<0.
\end{align*}
Note that $\chi(\rho)$ is an open down parabola with vertex on the right half-plane, negative y-intercept, and zero discriminant. Therefore, $\chi(\rho)$ has a unique zero at 
\begin{align*}
	\rho=\frac{(\alpha+\gamma\kappa+2\sqrt{\alpha\gamma\kappa})(-\alpha+\sqrt{\alpha\gamma\kappa})}{(\gamma\kappa-\alpha)(\alpha+\gamma\kappa+2\sqrt{\alpha\gamma\kappa})}=\rho^*.
\end{align*}
Hence, 
\begin{equation*}
    \rho<\rho^*\; \implies \; \chi(\rho)<0 \; \iff \;\beta_\Delta<\beta_c.
\end{equation*}
With \eqref{rel_betaBbetaD} this means that for $\rho<\rho^*$ it holds that $\beta_b<\beta_\Delta<\beta_c$. As the positive root $\beta_\Delta$ of $\Delta (\beta)$ lies on the right of the zero of $b(\beta)$, then for $\beta\geq\beta_\Delta$ we have $\Delta (\beta)\geq0$ and necessarily $b(\beta)<0$. As $c(\beta)>0$ (that is $\r0<1$) for $\beta<\beta_c$, the condition on $\beta$ to have two positive roots $I_\pm^*$ of~$\Phi(I^*)$ is $\beta_\Delta\leq\beta<\beta_c$. Dividing by $\beta_c$ and using the definition~\eqref{def:Rc} of $\mathcal{R}_C$, we obtain the condition on $\r0$ as in statement \textit{b)} of the theorem. The rest follows from Theorem~\ref{theo:bifdir}.
\end{proof}
\begin{figure}[t]
	\centering
	\includegraphics[width=0.9\textwidth]{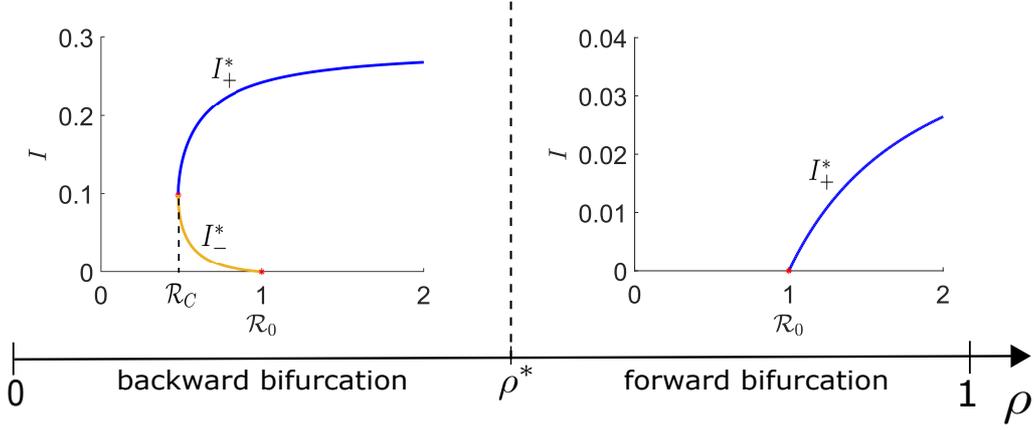}
	\caption{Visualization of statements in Theorem~\ref{Theorem1} and Theorem~\ref{Theorem2}: Endemic equilibria of system~\eqref{eq:reducedmodel} in dependence on $\rho$.}
	\label{fig:FigureX}
\end{figure}
The results of Theorem~\ref{Theorem1} and Theorem~\ref{Theorem2} are summarized in Fig.~\ref{fig:FigureX}. Local stability of the endemic equilibria can be determined evaluating the Jacobian matrix~\eqref{Jacobian} of the system at $\mathcal{E}_\pm$,
\[
J|_{\mathcal{E}_{\pm}}=
\left( {\begin{array}{ccc}
	-\beta I^*_\pm-\gamma & -\beta S_\pm^*-\gamma & \eta-\gamma\\
	\left(\rho-1\right)\beta I^*_\pm\,\, &\beta\left( \left(\rho-1\right)S_\pm^*+1-\left(1+\kappa\right)I^*_\pm\right)-\alpha\,\, & -\beta I^*_\pm \\
	0 & \alpha & -\eta \\	
	\end{array} } \right).
\]
The characteristic polynomial of $J|_{\mathcal{E}_\pm}$ is given by
\begin{equation}
\Pi(\lambda)=\lambda^3+\lambda^2a_2+\lambda a_1+a_0,
\label{charpol}
\end{equation}
where
\begin{align*}
a_2=&\beta I^*_\pm\left(2+\kappa\right)+\gamma+\alpha+2\beta S_\pm^*\left(1-\rho\right)-\beta+\eta\\
a_1=&2\beta^2I^*_\pm S_\pm^*\left(1-\rho\right)+\beta^2{I^*_\pm}^2\left(1+\kappa\right)+\gamma\alpha+\gamma\beta S_\pm^*\left(1-\rho\right)\\
&\;+{}\gamma\beta I^*_\pm\left(3-\rho\right)+\gamma\beta\frac{\alpha}{\eta}I^*_\pm+\eta\alpha+\eta\beta S^*\left(1-\rho\right)+3\eta\beta I^*_\pm\\
&\;+{}\alpha\beta I^*_\pm+\eta\gamma-\beta^2 I^*_\pm-\eta\beta-\gamma\beta\\
a_0=&\eta\beta^2 I^*_\pm\left(2\left(S_\pm^*\left(1-\rho\right)+I^*_\pm\right)-1\right)+\gamma\eta\beta(S_\pm^*(1-\rho)\\
&\;+{}I^*_\pm\left(3-\rho\right)-1)+(\eta-\gamma)\alpha\beta I^*_\pm\left(2-\rho\right)+\gamma\eta\alpha.
\end{align*}
With the Routh--Hurwitz criteria~\cite{murray} it follows that:
\begin{theorem}\label{eq:RH}
Let  $\mathcal{E}^*$ be an endemic equilibrium of system~\eqref{eq:reducedmodel} and let $J|_{\mathcal{E}^*}$ the coefficient matrix of the linearization of the system~\eqref{eq:reducedmodel} about $\mathcal{E}^*$. Then
$\mathcal{E}^*$ is LAS if the coefficients of the characteristic polynomial~\eqref{charpol} satisfy $a_2>0$, $a_0>0$, and  $a_1a_2>a_0$.
\end{theorem}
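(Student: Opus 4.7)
The plan is essentially a one-line appeal to the linearized stability principle combined with the Routh--Hurwitz criterion. Since the right--hand side of~\eqref{eq:reducedmodel} is smooth and $\mathcal{E}^*$ is an interior equilibrium, local asymptotic stability follows whenever every eigenvalue of $J|_{\mathcal{E}^*}$ has strictly negative real part. By construction, those eigenvalues are precisely the roots of the cubic $\Pi(\lambda)$ given in~\eqref{charpol}, so the task reduces to showing that $\Pi$ is Hurwitz--stable under the hypotheses of the theorem.

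For a monic real cubic $\lambda^3 + a_2\lambda^2 + a_1\lambda + a_0$, the Routh--Hurwitz criterion (see, e.g.,~\cite{murray}) states that all roots lie in the open left half--plane if and only if
\[
a_2>0,\qquad a_0>0,\qquad a_1 a_2>a_0.
\]
These are exactly the hypotheses of the theorem, so I would simply quote this criterion. Positivity of $a_1$ is not an independent requirement: from $a_2>0$ together with $a_1 a_2 > a_0 > 0$ one immediately deduces $a_1>0$, so the three listed inequalities already encode the full Routh--Hurwitz chain for a cubic; no fourth inequality is hidden.

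Given these remarks, the proof itself collapses to a single invocation of the criterion applied to $\Pi(\lambda)$, and there is no genuine obstacle to writing it down. The only piece of book-keeping one must trust is the correctness of the expansion of $\det(\lambda\,\mathrm{Id}-J|_{\mathcal{E}^*})$ that produced the explicit $a_0,a_1,a_2$ above; since that expansion has been performed by hand from~\eqref{Jacobian}, no further algebra is required for the statement as written. The genuinely hard question, namely whether the three inequalities actually hold for $\mathcal{E}_\pm$ as functions of the parameters $(\beta,\gamma,\eta,\alpha,\rho)$, is not part of the claim: because the coefficients depend cubically on $I_\pm^*$ via $S_\pm^*=S_\pm^*(I_\pm^*)$ and $R_\pm^* = (\alpha/\eta)I_\pm^*$, a closed-form verification looks intractable, and the theorem is evidently intended as a sufficient criterion to be checked numerically in the bifurcation study that follows.
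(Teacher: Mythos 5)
Your proposal is correct and matches the paper's approach exactly: the paper gives no separate proof beyond the phrase ``With the Routh--Hurwitz criteria it follows that,'' i.e., it too simply invokes the Routh--Hurwitz conditions for a monic cubic together with the principle of linearized stability. Your added observation that $a_1>0$ is implied by $a_2>0$ and $a_1a_2>a_0>0$ is a correct and slightly more careful remark than the paper offers.
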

We enrich the analytical results obtained so far by means of  numerical investigations.

\section{Numerical bifurcation analysis}
From the analysis in Sect.~\ref{sec:results} it becomes clear that $\beta$ and $\rho$ are critical parameters affecting the qualitative behavior of the system. In this section we extensively investigate the $(\beta,\rho)$-parameter plane and numerically identify bifurcations of codimension 1 and 2. The remaining parameters are fixed $\eta=0.02$, $\alpha=0.2$, $\gamma=0.3$, such that $\rho^*\approx0.1976$. This choice is not motivated by a specific application or data, but highlights the rich dynamics the model can produce. All parametric portraits and simulations shown in what follows are produced using the numerical bifurcation software MATCONT~\cite{dhooge}. Background on bifurcation theory can be found e.g. in the seminal books by Yuri Kuznetsov~\cite{kuz} or Stephen Wiggins~\cite{wiggins}.\\
\ \\
At first we investigate regions of the $(\beta,\rho)$-plane where $\rho$ is constant and look for bifurcations in $\beta$ only. Figure~\ref{fig:Figure1} shows the $(\beta, I)$-plane, where curves of (the $I$-component of) endemic equilibria in dependence of $\beta$ are plotted for six different values of $\rho$, ordered by ascending $\rho$. Branches in blue (respectively, yellow) represent locally asymptotically stable (respectively, unstable) equilibria. 
Red asterisks mark either a transcritical bifurcation point (BP), a fold bifurcation point (LP), a supercritical Hopf bifurcation point ($\text{H}^{\text{sup}}$), a subcritical Hopf bifurcation point ($\text{H}^{\text{sub}}$) or 
a neutral saddle equilibrium (NS, no bifurcation point).
Figure~\ref{fig:Figure1} shows that as $\rho$ increases
there is a continuous change from a backward bifurcation (Fig.~\ref{fig:Figure1}(a--d)) to a forward bifurcation (Fig.~\ref{fig:Figure1}(e--f)), as the fold bifurcation point LP crosses the $\beta$-axis to the lower half-plane. This is in line with our analytical results from the previous section~(cf.~Theorems~\ref{theo:bifdir}--\ref{Theorem2}). Further, Hopf bifurcations occur on the equilibria branch corresponding to $\mathcal{E}_+$. We observe a change from a subcritical Hopf bifurcation (Fig.~\ref{fig:Figure1}(a)) to a supercritical Hopf bifurcation (Fig.~\ref{fig:Figure1}(b--e)).
The neutral saddle equilibrium (Fig.~\ref{fig:Figure1}(a--c)) gets closer to the fold bifurcation point, finally collides with it and turns into a second supercritical Hopf bifurcation point on the branch of $\mathcal{E}_+$ (Fig.~\ref{fig:Figure1}(d--e)). Increasing $\rho$, the two supercritical Hopf bifurcation points approach each other (Fig.~\ref{fig:Figure1}(d--e)), collapse into one point and eventually vanish (Fig.~\ref{fig:Figure1}(f)).\\

\begin{figure}[t]
	\centering
	\includegraphics[width=\textwidth]{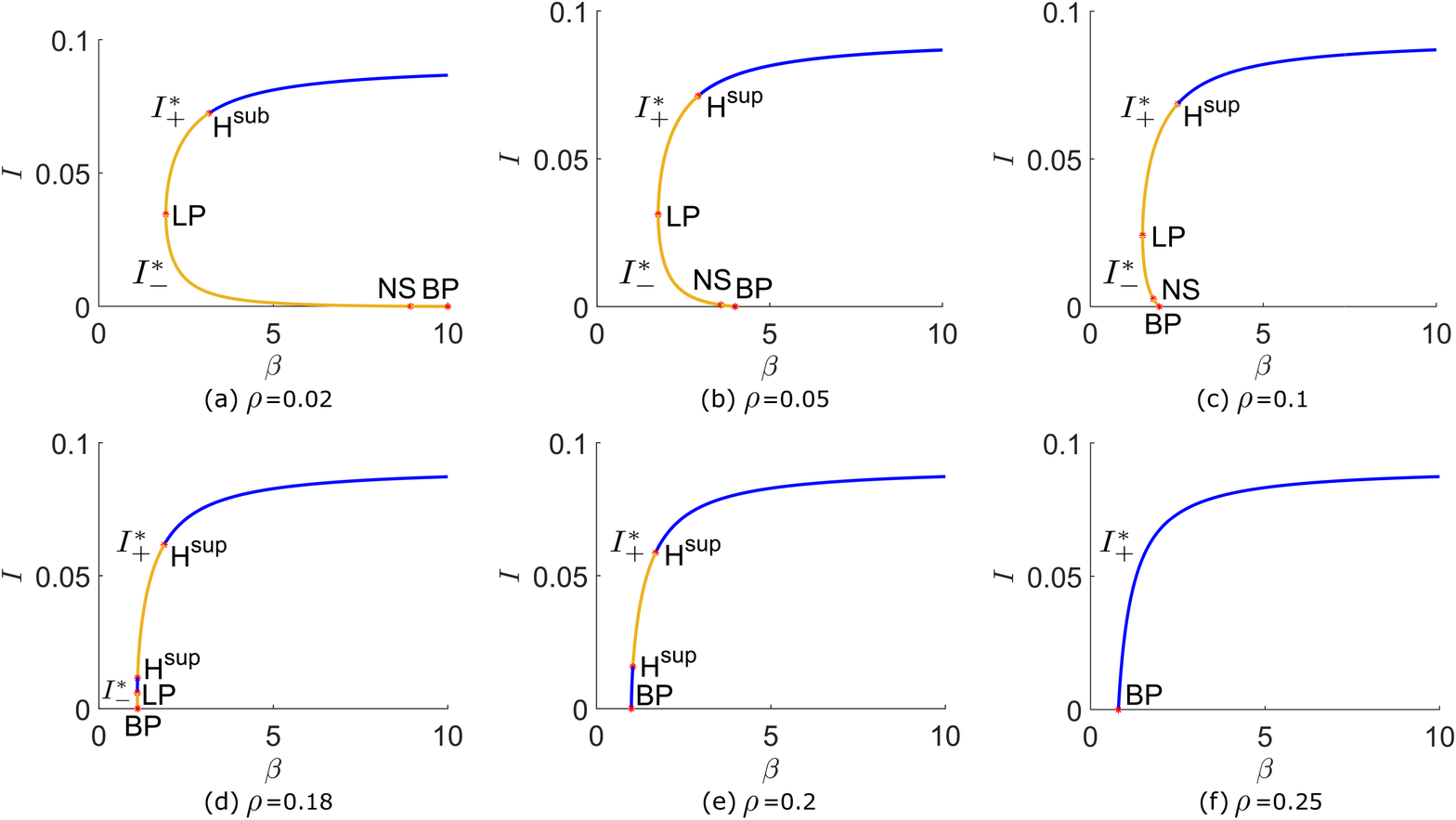}
	\caption{Curves of (the $I$-component of) endemic equilibria in dependence of $\beta$ are plotted for different values of $\rho$. Branches in blue (respectively, yellow) represent LAS (respectively, unstable) equilibria.}
	\label{fig:Figure1}	
\end{figure}

\noindent MATCONT provides additional information on the eigenvalues of the linearization about the endemic equilibria. In case of a backward bifurcation with a single Hopf bifurcation point (Fig.~\ref{fig:Figure1}(a--c)), both endemic equilibria are born unstable at the fold bifurcation. The unstable manifold of $\mathcal{E}_+$ is two-dimensional and the one of $\mathcal{E}_-$ is one-dimensional.  The two eigenvalues of the linearization about $\mathcal{E}_+$ with positive real part cross the imaginary axis at the Hopf bifurcation point so that $\mathcal{E}_+$ becomes LAS. In the situation where we observe two (supercritical, Fig.~\ref{fig:Figure1}(d--e)) Hopf bifurcation points on the branch of $\mathcal{E_+}$, this equilibrium was born LAS at the fold bifurcation. Two eigenvalues cross the imaginary axis to the right half-plane at the first Hopf bifurcation point and cross it again back to the left half-plane at the second Hopf bifurcation point.\\

\begin{figure}[t]
	\centering
	\includegraphics[width=\textwidth]{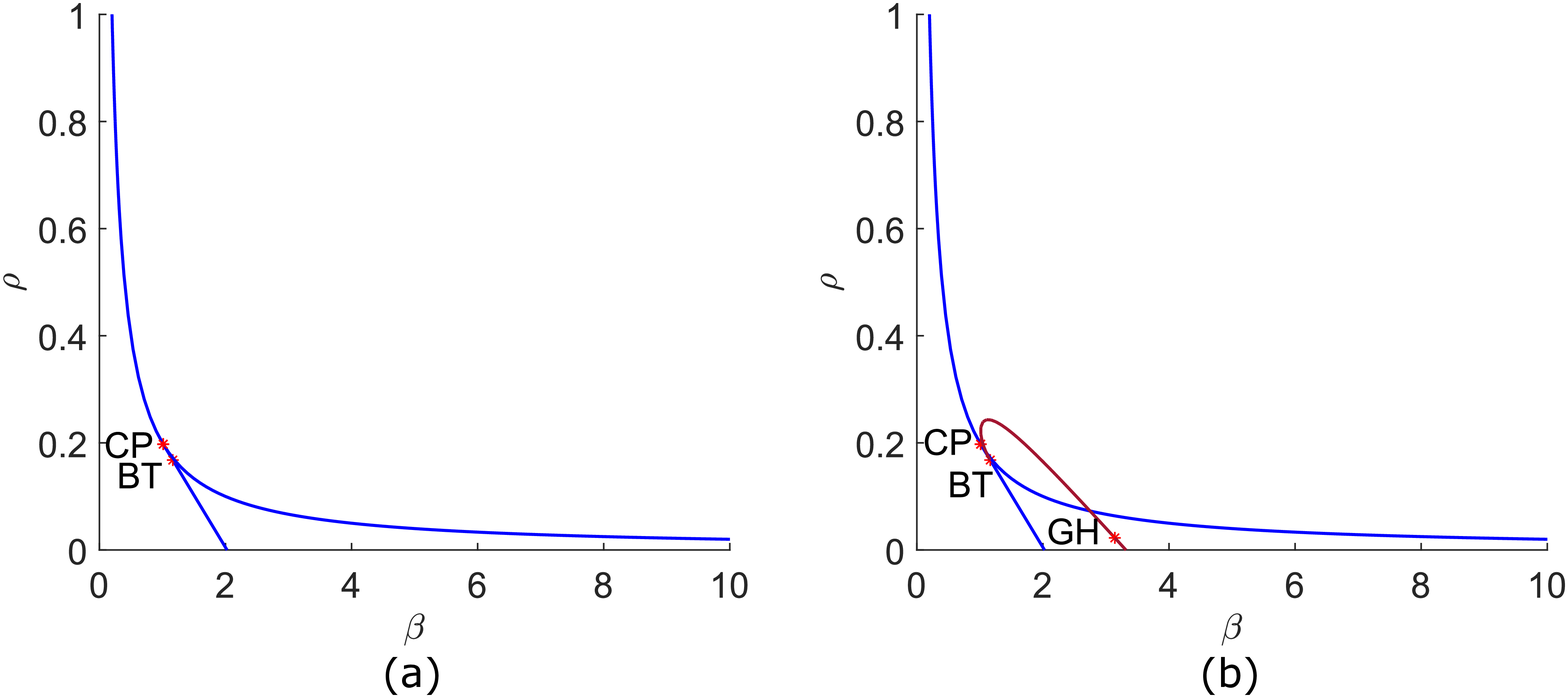}
	\caption{(a) Continuation of fold and transcritical bifurcation points with respect to $\beta$ and $\rho$; (b) Continuation of Hopf bifurcation points with respect to $\beta$ and $\rho$ starting from the Bogdanov--Takens point BT.}
	\label{fig:Figure2}
\end{figure}
\noindent The above observations help us to understand the parametric portrait in $\beta$ and $\rho$, and the arising codimension 2 bifurcations. Continuation of fold and transcritical bifurcation points with respect to $\beta$ and $\rho$ leads to Fig.~\ref{fig:Figure2}(a).
The curve of fold bifurcations (straight line) and the curve of transcritical bifurcations (curved line) meet at the point labeled CP. This point marks a cusp bifurcation where the normal form coefficient of the fold bifurcation vanishes. It divides the curve of transcritical bifurcations into the branch of forward bifurcations (upper part) and backward bifurcations (lower part). In addition, a Bogdanov--Takens (BT) bifurcation occurs on the curve of fold bifurcation points, where an additional eigenvalue approaches the imaginary axis. At the BT point the curve of Hopf bifurcation points meets the curve of fold bifurcations tangentially. Continuation of the Hopf bifurcation curve from BT leads to the red curve in Fig.~\ref{fig:Figure2}(b) (neutral saddle equilibria are not shown here). 
The Bogdanov--Takens point BT marks the point where the neutral saddle equilibrium turns into a supercritical Hopf bifurcation point or vice versa. The semi-elliptic shape of the Hopf curve leads to the occurrence of two Hopf bifurcations, in the continuation with respect to $\beta$, for values of $\rho$ above the Bogdanov--Takens point (cf. Fig.~\ref{fig:Figure1}(d--e)). Another codimension 2 bifurcation arises on the curve of Hopf bifurcations. A generalized Hopf bifurcation, where the first Lyapunov coefficient is zero and changes its sign, labeled GH marks the point where the Hopf bifurcation changes from subcritical (lower part) to supercritical (upper part). In what follows, we study the branches of limit cycles emerging from these Hopf bifurcation points. \\

\begin{figure}[t]
	\centering
	\includegraphics[width=\textwidth]{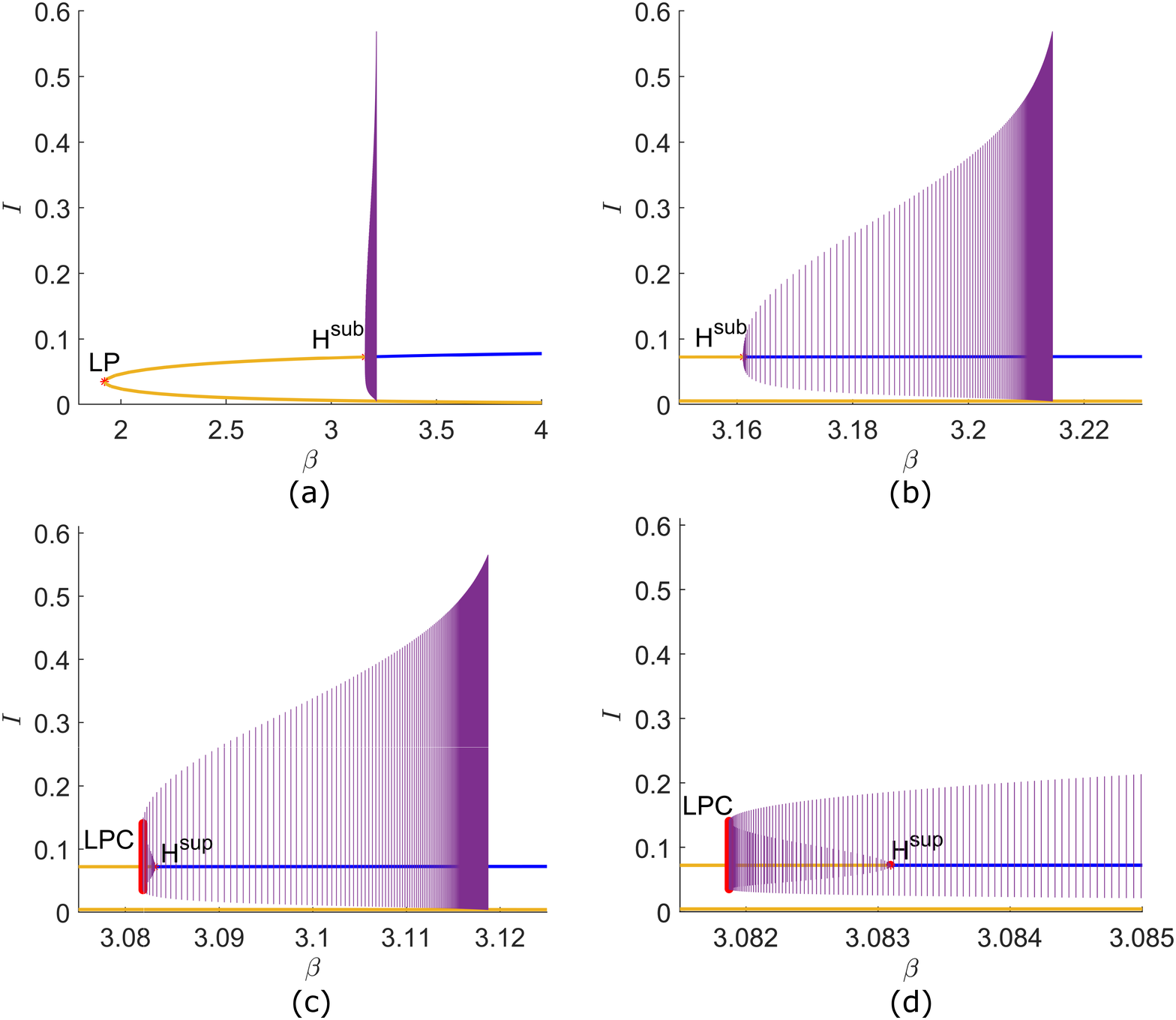}
	\caption{(a) Continuation of (the I-component of) limit cycles with respect to $\beta$ starting from subcritical Hopf bifurcation point for $\rho=0.02$; (b) Zoom of (a); (c) Continuation of (the I-component of) limit cycles with respect to $\beta$ starting from supercritical Hopf bifurcation point for $\rho=0.03$. The branch of stable limit cycles reaches a limit point cycle, folds back and becomes unstable; (d) Zoom of (c).}
	\label{fig:Figure5}
\end{figure} 
\noindent In Fig.~\ref{fig:Figure5}(a--b) we see that, for sufficiently small $\rho$, as $\beta$ increases, the amplitudes of the unstable limit cycles (with respect to the I-component) born at the subcritical Hopf bifurcation increase as well. The limit cycles get closer and closer to $\mathcal{E}_-$. This increases the period of the closed orbits, and a point moving on such orbits spends more and more time in the proximity of the equilibrium. The branch  of limit cycles disappears by colliding with the one-dimensional unstable manifold of $\mathcal{E_-}$ leading to a saddle homoclinic bifurcation~(more details are provided in the master thesis~\cite{JHmaster} of the first author).

Figure~\ref{fig:Figure5}(c--d) shows the same plot for a slightly larger value of $\rho$ such that the involved Hopf bifurcation is supercritical. The branch of stable limit cycles reaches a limit point cycle (LPC) then folds back and becomes unstable. Thus, there is a small interval of values of $\beta$ in which a stable limit cycle and an unstable one coexist. The unstable branch of limit cycles again vanishes through a saddle homoclinic bifurcation. Increasing $\rho$ further, the interval of values of $\beta$ where a stable limit cycle exists becomes larger, while the interval where an unstable limit cycle exists gets smaller and eventually vanishes (i.e. there is no LPC involved anymore). In this case it is the branch of stable limit cycles that vanishes through a saddle homoclinic bifurcation (not shown here, cf.~\cite{JHmaster}).

\begin{figure}[t]
	\centering
	\includegraphics[width=\textwidth]{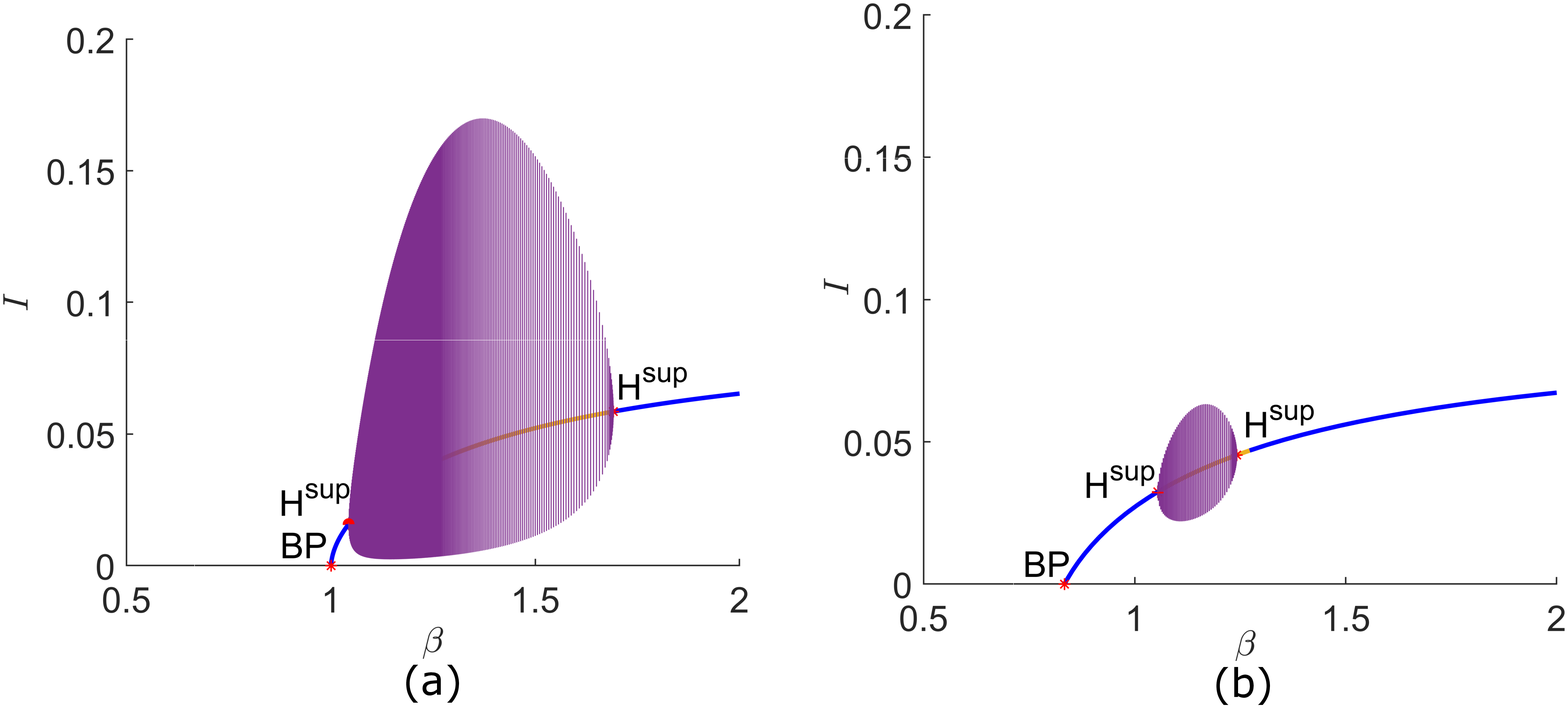}
	\caption{Continuation of (the I-component of) limit cycles with respect to $\beta$ starting from two supercritical Hopf bifurcation points for (a) $\rho=0.2$ and (b) $\rho=0.24$. The two branches of stable limit cycles meet and form an endemic bubble.}
	\label{fig:Figure9}
\end{figure}
Increasing $\rho$ further, two Hopf Bifurcation points, hence two branches of stable limit cycles exist, both vanishing through a saddle homoclinic bifurcation (not shown here, cf.~\cite{JHmaster}). For even larger values of $\rho$ there exists a bubble of limit cycles, starting and ending at the Hopf Bifurcation points (cf.~Fig.~\ref{fig:Figure9}(a)). Increasing $\rho$ the bubble becomes  smaller (cf.~Fig.~\ref{fig:Figure9}(b)), until the two Hopf points collide and the limit cycles disappear (cf.~Fig.~\ref{fig:Figure1}(f)).\\

\begin{figure}[t]
	\centering
	\includegraphics[width=\textwidth]{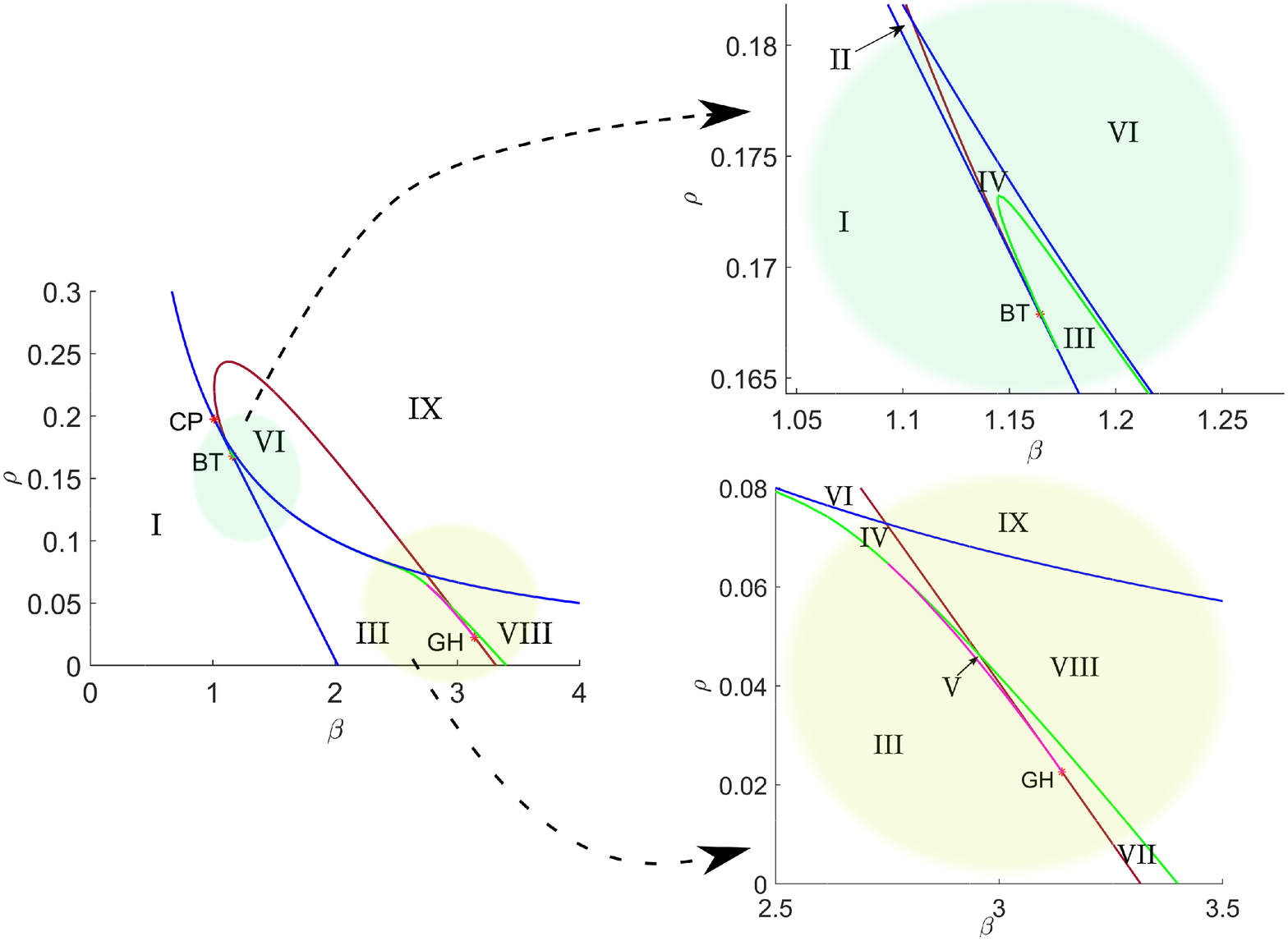}
	\caption{Parametric portrait with respect to $\beta$ and $\rho$. The different bifurcation curves subdivide the $(\beta,\rho)$-plane into regions. The green and yellow balloons show zoomed regions around the BT and GH point, respectively.}
	\label{fig:Figure12}
\end{figure}

\noindent Continuation of the curve of saddle homoclinic bifurcations (light green) and fold bifurcations of limit cycles (magenta) yield the complete parametric portrait in ~Fig.~\ref{fig:Figure12}. The different bifurcation curves subdivide the $(\beta,\rho)$-plane into regions where the system shows different qualitative behavior. We conclude this numerical study by a discussion of typical solution trajectories and orbits for each region.\\

\begin{figure}[b]
	\centering
	\includegraphics[width=\textwidth]{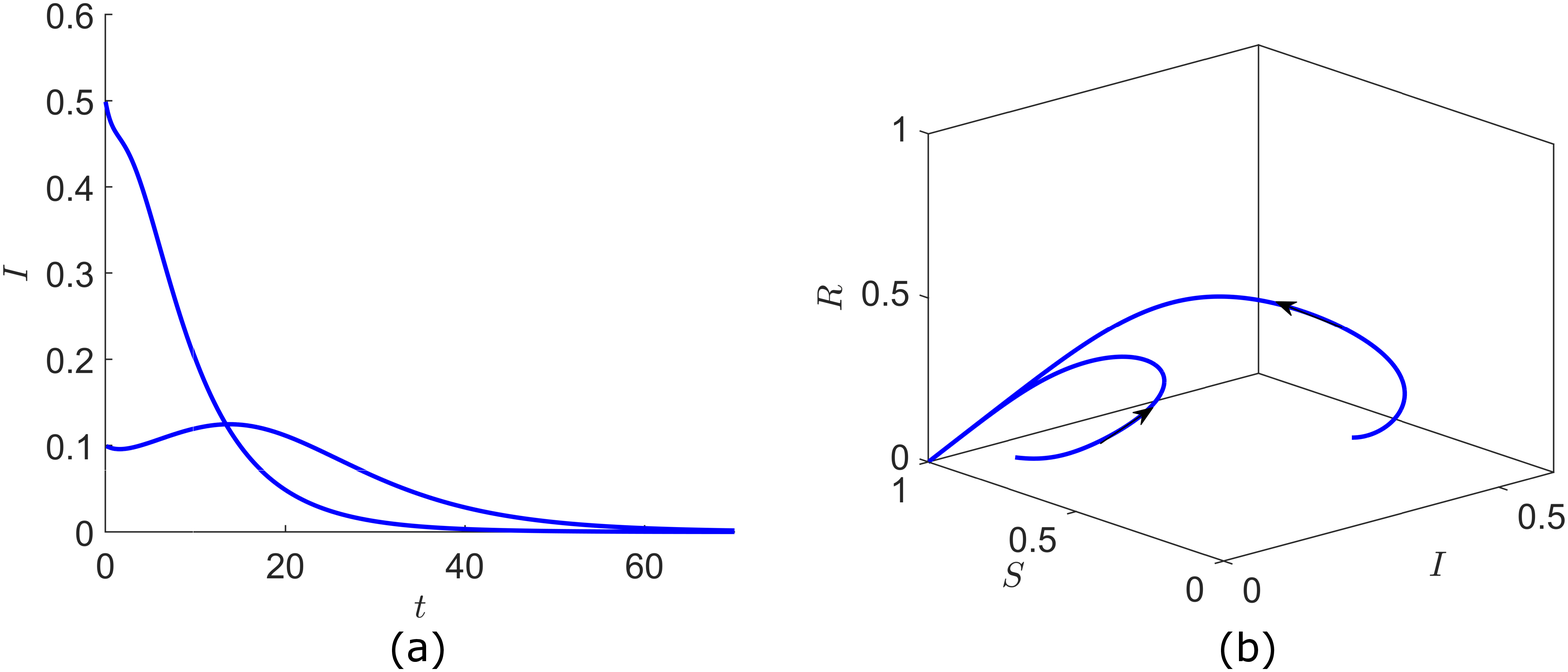}
	\caption{Dynamics in region I: Convergence to $\mathcal{E}_0$ shown in (a) $(t,I)$-plane and (b) $(S,I,R)$-phase space. For these simulations we set parameter values: $\beta=1,\, \rho=0.15,\, \alpha=0.2,\, \gamma=0.3$, and $\eta=0.02$; and initial conditions: $R_0=0,\, I_0=0.5$ and $I_0=0.1,\, S_0=1-I_0$.}
	\label{fig:Figure13}
\end{figure}

\noindent \textbf{Region I}: The disease-free equilibrium $\mathcal{E}_0$ is the only equilibrium and it is LAS (Fig.~\ref{fig:Figure13}).
Crossing the boundary to region IX leads to a transcritical bifurcation through which $\mathcal{E}_0$ loses stability and the LAS $\mathcal{E}_+$ arises. Crossing the boundary to region II results in a fold bifurcation, with the LAS equilibrium $\mathcal{E_+}$ and the unstable equilibrium $\mathcal{E}_-$ (with one-dimensional unstable manifold). Passing from region I to region III also leads to a fold bifurcation, through which both $\mathcal{E}_+$ (two-dimensional unstable manifold) and $\mathcal{E}_-$ (one-dimensional unstable manifold) arise unstable.\\

\begin{figure}[t]
	\centering
	\includegraphics[width=\textwidth]{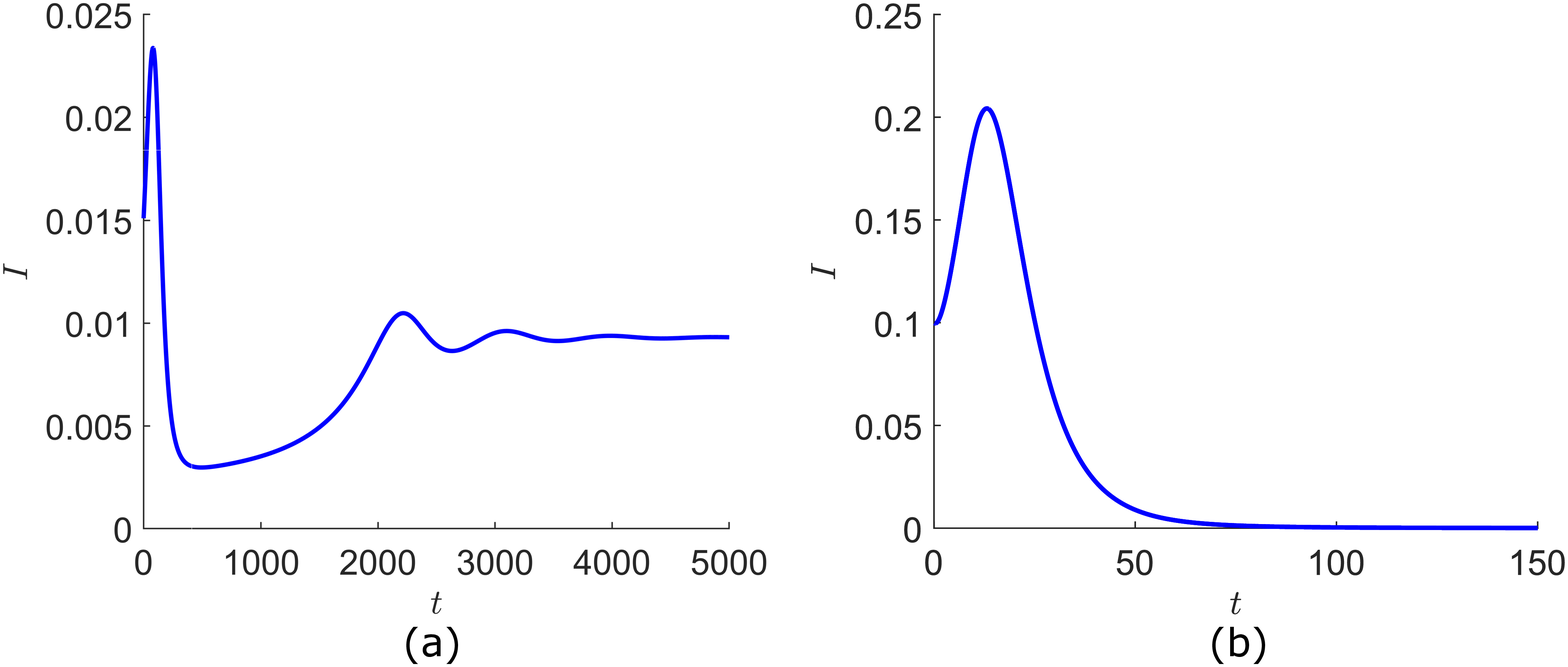}
	\caption{Dynamics in region II: Bistability of $\mathcal{E}_0$ and $\mathcal{E}_+$. For these simulations we set parameter values: $\beta=1.1,\, \rho=0.181,\, \alpha=0.2, \,\gamma=0.3$, and $\eta=0.02$, and initial conditions: (a) $R_0=0.1,\, S_0=0.85,\, I_0=0.015$; (b) $R_0=0,\, S_0=0.9,\, I_0=0.1$.}
	\label{fig:Figure14}
\end{figure}

\noindent \textbf{Region II}: The disease-free equilibrium $\mathcal{E}_0$ is LAS. Both endemic equilibria $\mathcal{E}_\pm$ exist, $\mathcal{E}_+$ being LAS and $\mathcal{E}_-$ being unstable with one-dimensional unstable manifold. Figure~\ref{fig:Figure14} shows the bistability of $\mathcal{E}_0$ and $\mathcal{E}_+$. Crossing the boundary to region IX leads to a transcritical backward bifurcation through which $\mathcal{E}_0$ loses stability and $\mathcal{E}_-$ moves out of the first quadrant. Thus, we observe a discontinous jump of orbits from the DFE $\mathcal{E}_0$ to the already quite large endemic equilibrium $\mathcal{E}_+$, which is known as hysteresis. Passing from region II to region IV leads to a Hopf bifurcation through which $\mathcal{E}_+$ becomes unstable with two-dimensional unstable manifold, and a stable periodic orbit emerges. Crossing the boundary between region II and region VI (intersection of Hopf and transcritical bifurcation curve) the above bifurcations happen simultaneously.\\

\begin{figure}[t]
	\centering
	\includegraphics[width=\textwidth]{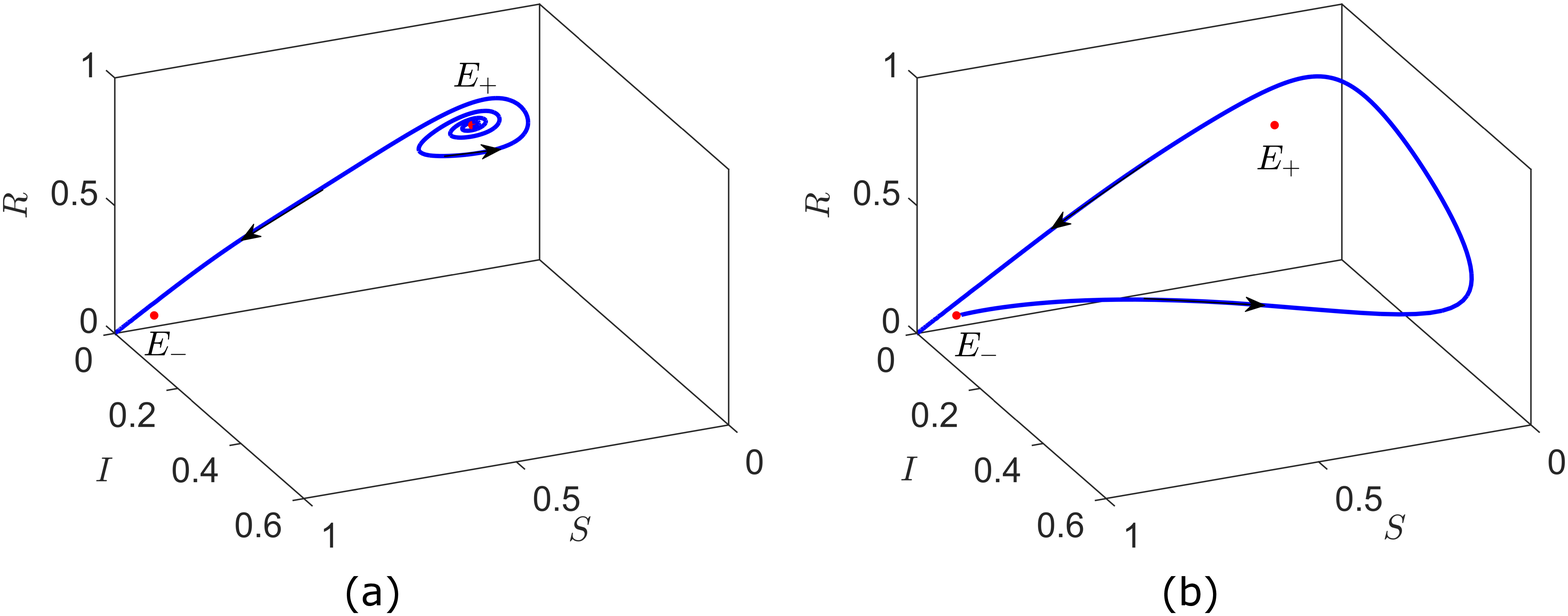}
	\caption{Dynamics in region III:  Convergence to the DFE $\mathcal{E}_0$. Both endemic equilibria $\mathcal{E}_\pm$ are unstable. For these simulations we set parameter values: $\beta=2.5,\, \rho=0.05,\, \alpha=0.2,\, \gamma=0.3$, and  $\eta=0.02$; and initial conditions: (a) $R_0=0.65,\, S_0=0.207,\, I_0=0.065$; (b) $R_0=0.05,\, S_0=0.9,\, I_0=0.006$.}
	\label{fig:Figure15}
\end{figure}

\noindent \textbf{Region III}: The system has three equilibria (cf.~Fig.~\ref{fig:Figure15}). The DFE $\mathcal{E}_0$ is LAS and both endemic equilibria $\mathcal{E}_\pm$ are unstable, the unstable manifold of $\mathcal{E}_+$ being two-dimensional, that of $\mathcal{E}_-$ one-dimensional. We observe the phenomenon of excitability: Solutions starting close to $\mathcal{E}_-$ follow its unstable manifold, which leads to a large epidemic outbreak before converging to $\mathcal{E}_0$ (cf.~Fig.~\ref{fig:Figure15}(b)). Crossing the boundary to region IV results into a saddle homoclinic bifurcation, which produces a stable periodic orbit. Passing the boundary to region V, a fold bifurcation of limit cycles takes place and a stable and an unstable periodic orbit are born. Moving from region III to region VII leads to a subcritical Hopf bifurcation, $\mathcal{E}_+$ becomes LAS and an unstable limit cycle arises.\\

\begin{figure}[t]
	\centering
	\includegraphics[width=\textwidth]{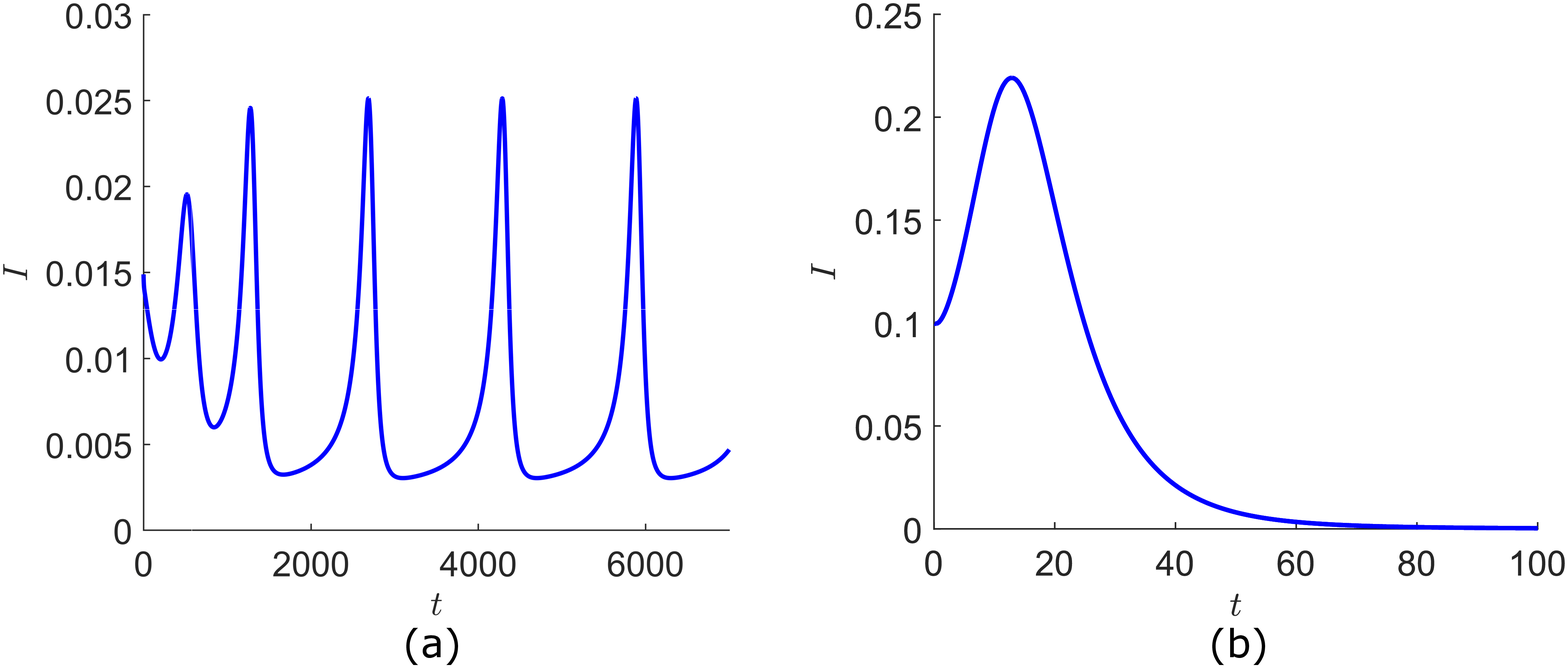}
	\caption{Dynamics in region IV: Bistability of $\mathcal{E}_0$ and a limit cycle. For these simulations we set parameter values: $\beta=1.14,\, \rho=0.174,\, \alpha=0.2,\, \gamma=0.3$ and $\eta=0.02$; and initial conditions: (a) $R_0=0.15,\, S_0=0.82,\, I_0=0.015$; (b) $R_0=0,\, S_0=0.9,\, I_0=0.1$.}
	\label{fig:Figure16}
\end{figure}

\noindent \textbf{Region IV}: The disease-free equilibrium $\mathcal{E}_0$ is LAS and both endemic equilibria $\mathcal{E}_\pm$ are unstable, 
the unstable manifold of $\mathcal{E}_+$ being two-dimensional, that of $\mathcal{E}_-$ one-dimensional. Additionally, a stable limit cycle exists. This leads to a bistability of the disease-free equilibrium and a periodic solution (cf.~Fig.~\ref{fig:Figure16}). 
Crossing the boundary to region V results into a saddle homoclinic bifurcation, which produces a second (unstable) periodic orbit. Passing the boundary to region VI, a transcritical bifurcation occurs, through which $\mathcal{E}_0$ loses stability and $\mathcal{E}_-$ moves out of the first quadrant (hysteresis, cf. Region II). Moving from region IV to region VIII leads to a Hopf bifurcation, $\mathcal{E}_+$ becomes LAS and the stable periodic orbit vanishes. Crossing the boundary to region IX (intersection of Hopf and transcritical bifurcation curve) the last two bifurcations happen simultaneously.\\

\begin{figure}[t]
	\centering
	\includegraphics[width=\textwidth]{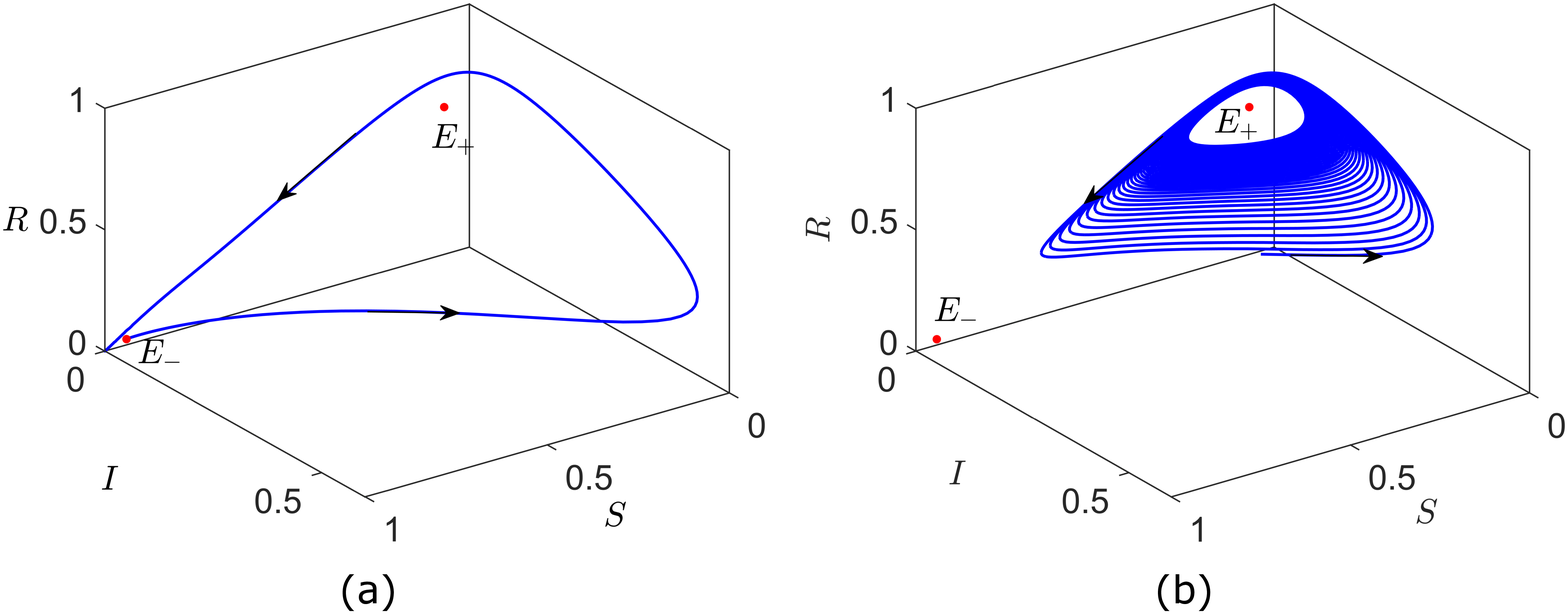}
	\caption{Dynamics in region V: Bistability of $\mathcal{E}_0$ and a limit cycle. For these simulations we set parameter values: $\beta=2.952,\, \rho=0.046,\, \alpha=0.2,\, \gamma=0.3,\,\eta=0.02$ and initial conditions: (a) $R_0=0.03,\, S_0=0.94,\, I_0=0.003$; (b) $R_0=0.3,\, S_0=0.3,\, I_0=0.2$.}
	\label{fig:Figure17}
\end{figure}
\noindent \textbf{Region V}: The dynamical behavior in this region is similar to that in Region IV (cf.~Fig.~\ref{fig:Figure17}). 
Additionally to the stable limit cycle, an unstable limit cycle exists, separating the basin of attraction of $\mathcal{E}_0$ and of the stable limit cycle. Like in Region III, we observe the phenomenon of excitability for orbits starting close to $\mathcal{E}_-$ (cf.~Fig.~\ref{fig:Figure17}(a)).  Moving to region VII leads to a supercritical Hopf bifurcation. The stable limit cycle disappears and $\mathcal{E}_+$ becomes LAS. Crossing the boundary to region VIII (intersection of Hopf and saddle homoclinic bifurcation curve) the supercritical Hopf bifurcation happens simultaneously with a saddle homoclinic bifurcation through which also the unstable limit cycle vanishes.\\

\begin{figure}[t]
	\centering
	\includegraphics[width=\textwidth]{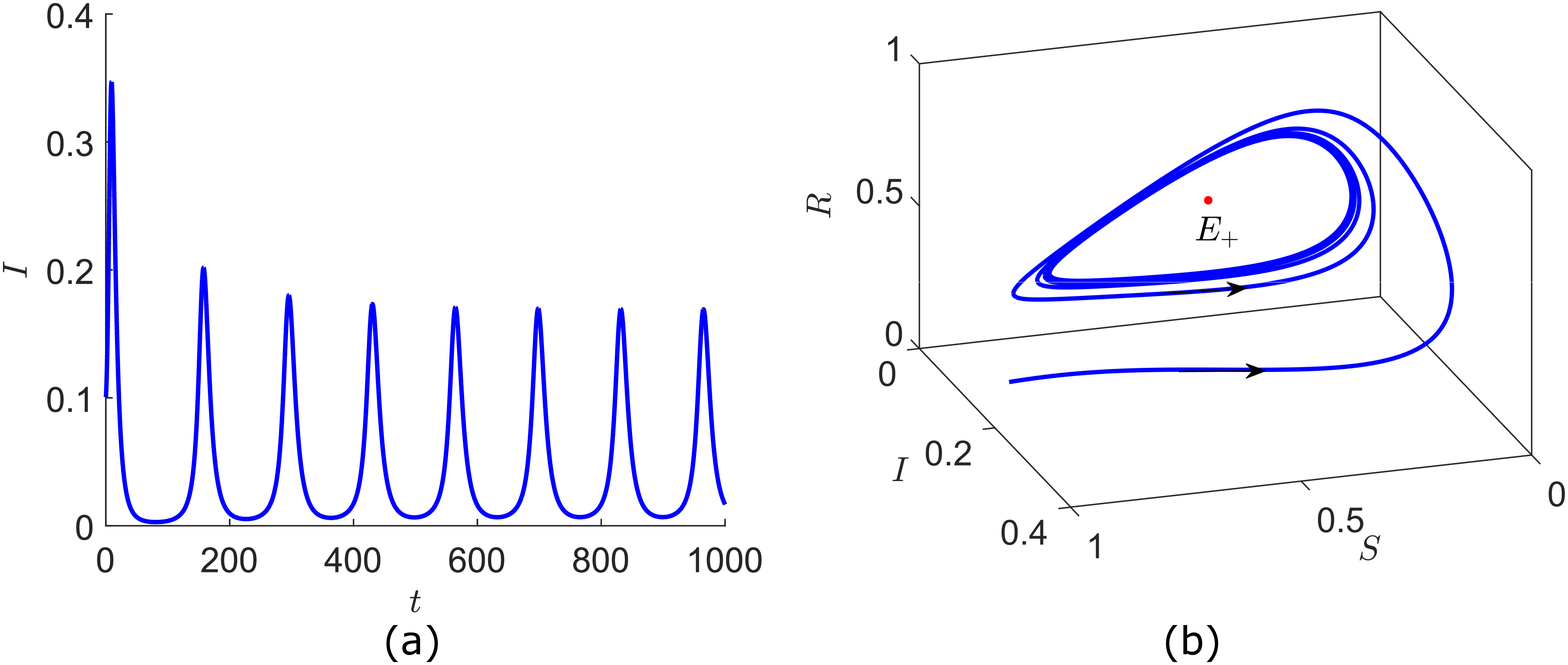}
	\caption{Dynamics in region VI: Convergence to a stable limit cycle shown in (a) $(t,I)$-plane and (b) $(S,I,R)$-phase space. For these simulations we set parameter values: $\beta=1.4,\, \rho=0.2,\, \alpha=0.2,\, \gamma=0.3,\, \eta=0.02$ and initial conditions: $R_0=0,\, S_0=0.9,\, I_0=0.1$.}
	\label{fig:Figure18}
\end{figure} 
\noindent \textbf{Region VI}: The disease-free equilibrium $\mathcal{E}_0$ is unstable, and so is also the only endemic equilibrium $\mathcal{E}_+$. There exists a stable limit cycle
to which solutions converge (cf.~Fig.~\ref{fig:Figure18}).
Moving from region VI to region IX leads to a supercritical Hopf bifurcation, the stable limit cycle vanishes and $\mathcal{E}_+$ becomes LAS.\\

\begin{figure}[t]
	\centering
	\includegraphics[width=\textwidth]{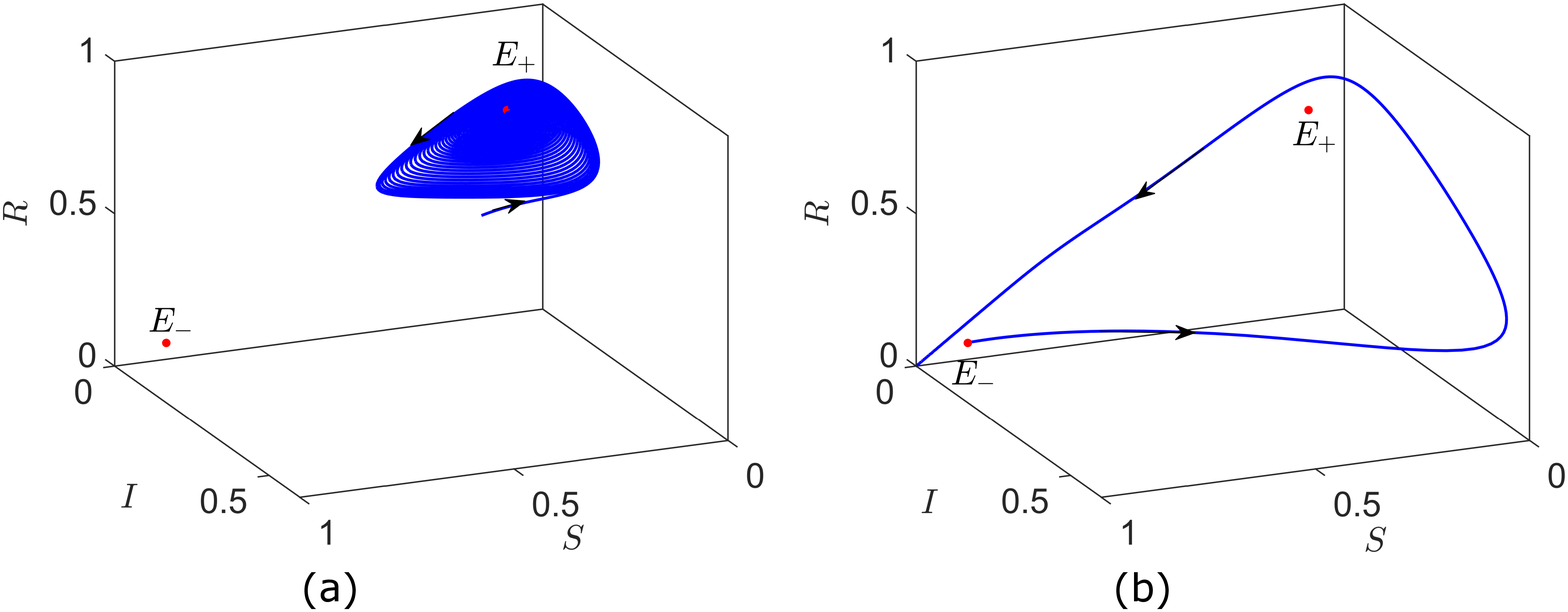}
	\caption{Dynamics in region VII: Bistability of $\mathcal{E}_0$ and $\mathcal{E}_+$. For these simulations we set parameter values: $\beta=3.27,\, \rho=0.01,\, \alpha=0.2,\, \gamma=0.3,\, \eta=0.02$ and initial conditions: (a) $R_0=0.5,\, S_0=0.3,\, I_0=0.2$; (b) $R_0=0.06,\, S_0=0.88,\, I_0=0.006$.}
	\label{fig:Figure19}
\end{figure}

\noindent \textbf{Region VII}: The equilibria $\mathcal{E}_0$ and $\mathcal{E}_+$ are both LAS (cf.~Fig.~\ref{fig:Figure19}),  whereas the third equilibrium $\mathcal{E}_-$ is unstable with one-dimensional unstable manifold. Furthermore, there exists an unstable periodic orbit that separates the basin of attraction of $\mathcal{E}_0$ and $\mathcal{E}_+$. Like in Region III, we observe the phenomenon of excitability for orbits starting close to $\mathcal{E}_-$ (cf.~Fig.~\ref{fig:Figure19}(b)). Crossing the boundary to region VIII leads to a saddle homoclinic bifurcation and the unstable periodic orbit vanishes.\\

\noindent \textbf{Region VIII}: In this region, we have bistability of  $\mathcal{E}_0$ and $\mathcal{E}_+$~(cf.~Fig.~ \ref{fig:Figure21}(a)). The second endemic equilibrium $\mathcal{E}_-$ is unstable with one-dimensional unstable manifold connecting to $\mathcal{E}_+$. Crossing the boundary to region IX, a transcritical bifurcation takes place, so that $\mathcal{E}_0$ loses stability and $\mathcal{E}_-$ leaves the positive quadrant (hysteresis, cf. Region II).\\

\begin{figure}[t]
	\centering
	\includegraphics[width=\textwidth]{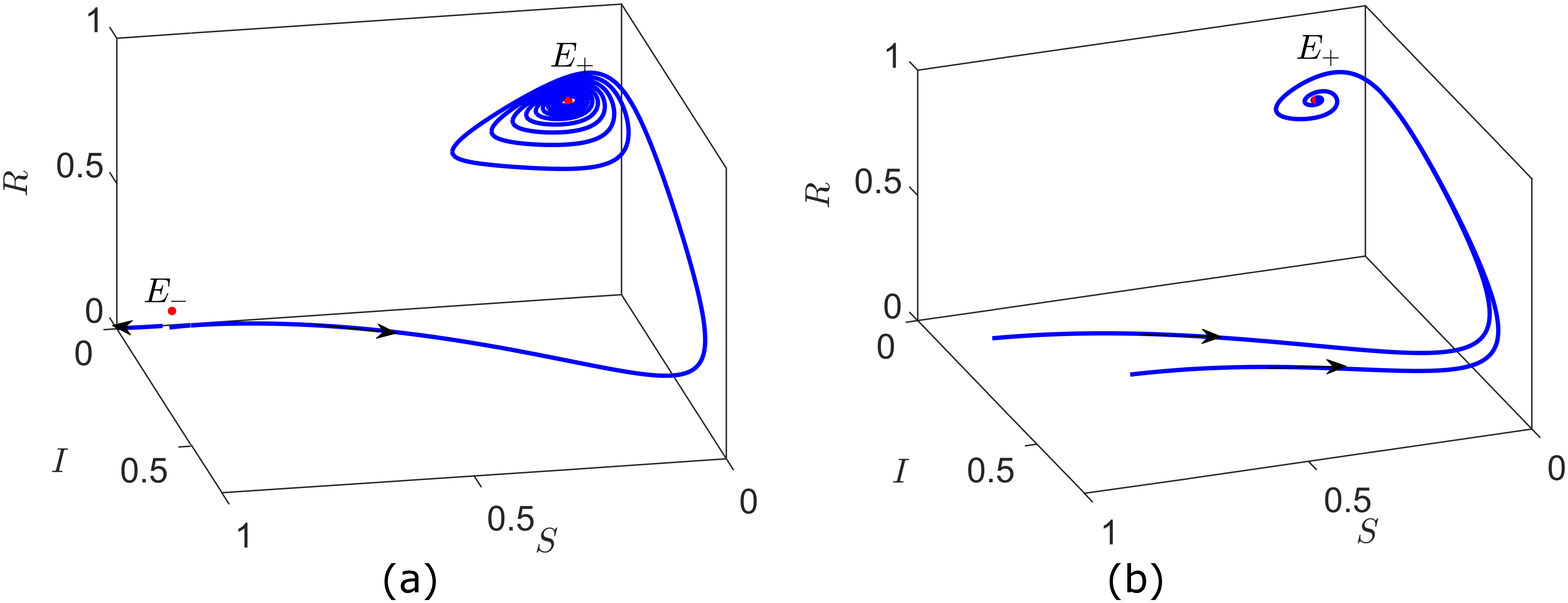}
	\caption{(a) Dynamics in region VIII: Bistability of $\mathcal{E}_0$ and $\mathcal{E}_+$. 
	For these simulations we set parameter values: $\beta=3.4, \rho=0.01,\, \alpha=0.2,\, \gamma=0.3,\, \eta=0.02$ and initial values: $R_0=0,\, S_0=0.9,\, I_0=0.001$ and $I_0=0.01$. (b) Dynamics in region IX: Convergence to the unique endemic equilibrium $\mathcal{E}_+$. For these simulations we set parameter values: $\beta=2.5,\, \rho=0.25,\, \alpha=0.2,\, \gamma=0.3,\, \eta=0.02$ and initial conditions: $R_0=0,\, I_0=0.1$ and $I_0=0.3,\, S_0=1-I_0$.}
	\label{fig:Figure21}
\end{figure}
\noindent\textbf{Region IX}: The DFE $\mathcal{E}_0$ is unstable, and the unique endemic equilibrium $\mathcal{E}_+$ is LAS~(cf.~Fig.~\ref{fig:Figure21}(b)).\\
\ \\
\noindent So far we have investigated the $(\beta,\rho)$-parameter plane fixing the values of $\alpha,\,\gamma,\,\eta$. One might ask how the qualitative behavior of system~\eqref{eq:reducedmodel} is affected by the particular choice of these three parameters. For instance, we constructed the parametric portrait from Fig.~\ref{fig:Figure12} for different values of $\eta$ (same was done for $\alpha$ and $\gamma$, though not shown here). The result is shown in Fig.~\ref{fig:Figure22}. Reducing $\eta$ (Fig.~\ref{fig:Figure22}(a)) region VI gets significantly larger. Slowing down the transition from $R$ to $S$ makes the model closer to a SIRS system with delay (cf.~\cite{Taylor,MVBsiap}) enhancing the occurrence of oscillations. Moreover, the generalized Hopf point in Fig.~\ref{fig:Figure22}(b) moves to the fourth quadrant. In contrast, increasing $\eta$ (Fig.~\ref{fig:Figure22}(c)) region VI gets significantly smaller, whereas the points GH, CP, BT move up along the bifurcation curves.

\begin{figure}[t]
	\centering
	\includegraphics[width=\textwidth]{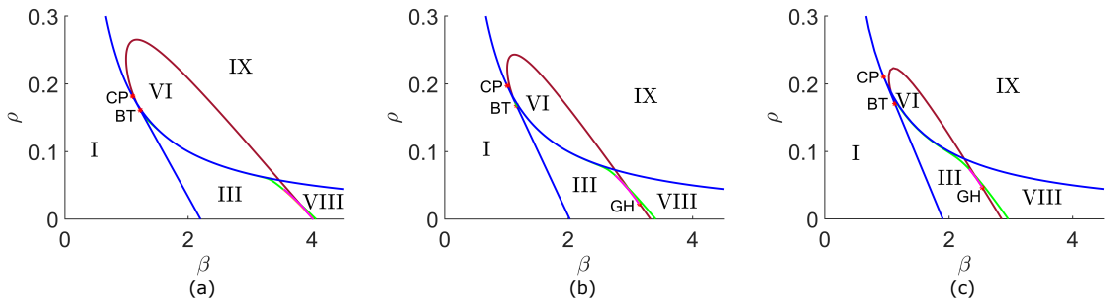}
	\caption{Parametric portrait in $\beta$ and $\rho$, for $\alpha=0.2$, $\gamma=0.3$, and (a) $\eta=0.016$; (b)  $\eta=0.02$; (c)=0.024.}
	\label{fig:Figure22}
\end{figure}

\clearpage
\section{Discussion}
In this work we have presented a mathematical model for transmission dynamics with two nonlinear stages of contagion, applying analytical and numerical methods to analyze its qualitative behavior. The two-stage contagion is combined with a renewal of the susceptible compartment, modeled by (i) the waning of immunity in inactive/resistant individuals and (ii) the fading of infection in those individuals who are weakened after the first contact with the promoting/infected community. This leads to rich dynamics, including bistability of equilibria or bistability of an equilibrium and a periodic solution, discontinuous regime shifts through hysteresis effects, and excitability. Thus, the multi-stage nature of social contagion processes might explain some of the complex phenomena observed in social dynamics (see e.g. the irregular~\cite{ITApool} or periodic~\cite{USpool} outcomes in political elections, or the emergence of new trends in the usage of social media~\cite{RiseSocialMedia}). In a previous study by Guy Katriel~\cite{kat} similar properties were determined for a two-stage contagion model with demographic turnover. This lets us conjecture that the rich dynamics observed in our work and in~\cite{kat} is due to the coupling of a two-stage contagion process with any (demographic or "immunological") source of renewal of the susceptible population. Despite of the analogies with Katriel's work, the combination of waning of immunity and fading of infection in our model leads to additional analytical complexity. The analytical advantage of Katriel's model was possibly due to the choice of the birth rate in the $S$-compartment matching with the death rates of all compartments. Katriel's model shows also both backward and forward bifurcation, and in the latter case it behaves like a one-stage contagion model for any choice of $\beta$. In contrast, our model shows Hopf bifurcations, hence periodic solutions, also in case of a forward bifurcation (cf.~Fig. \ref{fig:Figure12}(e)). Single phenomena which can be observed in our model have been previously found also for variations of the classical one-stage contagion models. For example SIRS models with delayed loss of immunity~\cite{Taylor} naturally show stable periodic solutions, whereas bistability of equilibria was found e.~g.~in models with exogenous reinfection~\cite{chavez2004} or imperfect vaccination~\cite{brauer2004}.\\
\ \\
In this work we have focused on the investigation of the qualitative properties of a simple two-stage contagion model. 
We have numerically investigated the parameter space, focusing in particular on the effects of the transmission rate ($\beta$) and the probability of a perfect contact $(\rho)$. Of course the study could be repeated deriving the parametric portrait of the system~\eqref{eq:reducedmodel} with respect to the other model parameters as well. Moreover, we see three possible generalizations of our model: (i) the assumption that $\beta=\beta_2$ could be relaxed, and e.g. $W$-individuals might be assumed to have a higher susceptibility than $S$-individuals; (ii) individuals in the $W$-compartment could also be contagious (cf.~also~\cite{cris}); (iii) the model could also include births and deaths, in addition to waning/fading processes. All these variations would make the analytical investigations more challenging; however, a similar numerical investigation as presented in this work could be performed.
Thinking of applications and comparison with data, the major  limitation of our work is due to the deterministic approach. Dividing a population into a few homogeneous compartments, without taking into account interpersonal variability, is indeed a major simplification of reality. Refining our approach, agent-based modeling~\cite{deff} and complex networks~\cite{albi} could be used. In certain cases, previous works based on these methods also included two-stage contagion~\cite{hase,melnik}.

\section*{Acknowledgement}
The authors are supported by the LOEWE focus CMMS.
\begin{appendices}
\section{}
In the proof of Theorem~\ref{theo:bifdir} we referred to the  following result by Castillo-Chavez and Song~\cite{chavez2004} proved using center-manifold theory.

\begin{theorem}\label{theo:chavez}
    Let $f\in C^2(\mathbb{R}^n\times \mathbb{R}, \mathbb{R}^n)$. Consider the following general system of ODEs with a parameter $\beta$
	\begin{align}
	\frac{dx}{dt}=f(x,\beta)\label{eq:appendix_system}.
	\end{align}
	Without loss of generality assume that $x_0=0$ is an equilibrium point of the system, that is, $f(0,\beta)=0$ for all $\beta \in \mathbb{R}$. Assume the following:
	\begin{itemize}
		\item The linearization of the system \eqref{eq:appendix_system} $\mathcal{A}:=D_xf(0,0)=\left (\frac{\partial f_i}{\partial x_j}(0,0)\right )$ has zero as a simple eigenvalue and all other eigenvalues of $\mathcal{A}$ have negative real parts.
		\item The matrix $\mathcal{A}$ has a non-negative right eigenvector $w$ and a left eigenvector $v$ each corresponding to the zero eigenvalue.
	\end{itemize}
	Let $f_k$ be the $k$-th component of $f$ and
	\begin{align}
	a&=\sum_{k,i,j=1}^{n}v_kw_iw_j\frac{\partial^2 f_k}{\partial x_i\partial x_j}(0,0)\notag\\
	b&=\sum_{k,i=1}^n v_kw_i\frac{\partial^2 f_k}{\partial x_i\partial \beta}(0,0)\notag.
	\end{align}
	Then, the local dynamics of the system around $0$ is completely determined by the signs of $a$ and $b$:
	\begin{enumerate}
		\item $a>0,\,b>0$. When $\beta<0$ with $|\beta|\ll 1$, $0$ is locally asymptotically stable and there exists a positive unstable equilibrium; when $0<\beta\ll 1$, $0$ is unstable and there exists a negative and locally asymptotically stable equilibrium.
		\item $a<0,\,b<0$. When $\beta<0$ with $|\beta|\ll 1$, $0$ is unstable; when $0<\beta\ll 1$, $0$ is locally asymptotically stable and there exists a positive unstable equilibrium.
		\item $a>0,\,b<0$. When $\beta<0$ with $|\beta|\ll 1$, $0$ is unstable and there exists a locally asymptotically stable negative equilibrium; when $0<\beta\ll 1$, $0$ is stable and a positive unstable equilibrium appears.
		\item $a<0,\,b>0$. When $\beta$ changes from negative to positive, $0$ changes its stability from stable to unstable. Correspondingly, a negative unstable equilibrium becomes positive and locally asymptotically stable.
	\end{enumerate}
	In particular, if $a>0$ and $b>0$, a backward bifurcation occurs at $\beta=0$, and if $a<0$ and $b>0$, a forward bifurcation occurs at $\beta=0$.
\end{theorem}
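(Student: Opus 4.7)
The plan is to prove the theorem by center manifold reduction, converting the $n$-dimensional system \eqref{eq:appendix_system} into a scalar ODE on the one-dimensional center manifold and then reading off the bifurcation type from the lowest-order Taylor coefficients of that scalar ODE. First I would exploit the spectral decomposition of $\mathcal{A}$: since $0$ is a simple eigenvalue and all other eigenvalues lie in the open left half-plane, we can split $\mathbb{R}^n = E^c \oplus E^s$ with $E^c = \mathrm{span}(w)$ and $E^s$ the stable invariant subspace. Normalizing the left eigenvector so that $vw = 1$, I set $u := vx \in \mathbb{R}$ and $y := x - uw \in E^s$. In these coordinates the system becomes $\dot{u} = v f(uw + y, \beta)$, together with a complementary equation for $\dot{y}$ whose linearization on $E^s$ is hyperbolic and attracting.

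Second, I would invoke the parameterized Center Manifold Theorem, which (treating $\beta$ as an auxiliary state variable with $\dot{\beta} = 0$) yields a $C^2$ map $h:\mathbb{R}^2 \to E^s$ with $h(0,0) = 0$ and $Dh(0,0) = 0$, such that $W^c = \{uw + h(u,\beta)\}$ is a locally invariant center manifold capturing all nearby dynamics. The reduced scalar flow is $\dot{u} = g(u,\beta) := v f(uw + h(u,\beta), \beta)$. Since $f(0,\beta) \equiv 0$ by assumption, $g(0,\beta) \equiv 0$, which forces $g_\beta(0,0) = 0$; and since $v\mathcal{A} = 0$, $g_u(0,0) = 0$. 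The Taylor expansion therefore starts at second order,
\begin{equation*}
g(u,\beta) = \tfrac{1}{2}\,a\, u^2 + b\, u\beta + O\!\left(|u|^3 + |\beta|\,u^2 + |\beta|^2|u|\right),
\end{equation*}
where $a$ and $b$ coincide with the intrinsic expressions in the theorem statement. Verifying this identification is the crux of the argument: differentiating $g$ twice through the chain rule produces extra contributions from $h$, but each one is multiplied by $Dh(0,0) = 0$ and therefore drops out at leading order, leaving exactly $g_{uu}(0,0) = \sum_{k,i,j} v_k w_i w_j\, \partial^2 f_k / \partial x_i \partial x_j$ and $g_{u\beta}(0,0) = \sum_{k,i} v_k w_i\, \partial^2 f_k/\partial x_i \partial \beta$, both evaluated at $(0,0)$.

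Third, assuming $a \neq 0$, I would analyze the planar normal form $\dot{u} = (a/2)\,u^2 + b\,u\beta$ (neglecting cubic and higher-order terms, which are irrelevant by standard normal-form arguments since the quadratic terms are nondegenerate). Its equilibria are $u = 0$ and $u^{*}(\beta) = -2b\beta/a$, with linearization coefficients $b\beta$ and $-b\beta$ respectively, so the two branches always exchange stability as $\beta$ crosses $0$, giving a transcritical bifurcation. The sign of $a$ controls which side of $\beta = 0$ the nontrivial branch $u^{*}$ lies on, while the sign of $b$ controls the direction of the stability exchange. Running through the four sign combinations of $(a,b)$ reproduces cases 1--4 verbatim, and the concluding observation about backward ($a>0$, $b>0$) versus forward ($a<0$, $b>0$) bifurcation is immediate from the signs of $u^{*}(\beta)$ for small $\beta > 0$. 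The main obstacle is the second step: one must rigorously invoke the parameterized Center Manifold Theorem and carry out the chain-rule bookkeeping to show that the reduced quadratic coefficients are exactly $a$ and $b$; once this coefficient identification is in hand, the classification is an elementary exercise on a scalar ODE.
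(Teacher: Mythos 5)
This theorem is quoted in the appendix from Castillo-Chavez and Song~\cite{chavez2004}; the paper supplies no proof of its own, noting only that the original is established via center-manifold theory, and your sketch follows exactly that route and is essentially correct. Two small bookkeeping points: in the coefficient identification the term $v\,D_xf(0,0)\,h_{uu}$ is annihilated because $v\mathcal{A}=0$, not because $Dh(0,0)=0$ (the latter only removes the first-order contributions of $h$ inside $D^2f$), and the identity $g(0,\beta)\equiv 0$ additionally uses $h(0,\beta)=0$, which follows because the persistent equilibrium $x=0$ must lie on the center manifold for every small $\beta$.
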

\begin{note*}
Remark 1 in~\cite{chavez2004} suggests that if the equilibrium of interest in Theorem~\ref{theo:chavez} is a non-negative equilibrium $x_0$, then the requirement that w is non-negative is not necessary. When some components in w are negative, one can still apply Theorem~\ref{theo:chavez} provided that the $j$-th component of $w$ is positive whenever the $j$-th component of $x_0$ is zero. If the $j$-th component of $x_0$ is positive, then the $j$-th component of $w$ need not be positive. 
\end{note*}
\end{appendices}

\end{document}